\newtheorem{thm}{Theorem}[section]
\newtheorem{cor}[thm]{Corollary}
\newtheorem{rem}[thm]{Remark}
\newtheorem{lemma}[thm]{Lemma}
\newcommand{\T}{\mathbb{T}}
\newcommand{\R}{\mathbb{R}}
\newcommand{\ep}{\varepsilon}
\newcommand{\pa}{\partial}
\newcommand{\p}[1]{\left(#1\right)}
\newcommand{\sign}{\text{sign}}
\numberwithin{equation}{section}
\title{Singularity formation for the fractional Euler-Alignment system in 1D}
\author{Victor Arnaiz\footnote{victor.arnaiz@icmat.es}\hspace{0.1cm}  and  \'Angel Castro\footnote{angel\_castro@icmat.es}\\ \\
\small Instituto de Ciencias Matem\'aticas ICMAT-CSIC-UAM-UCM-UC3M\\ \small 28049, Madrid, Spain.}
\date{}
\newenvironment{proof}{\begin{trivlist} \item[] {\em Proof:}}{\hfill $\Box$
                       \end{trivlist}}
\renewcommand*\l@section{\@dottedtocline{1}{0em}{1.5em}}
\renewcommand*\l@subsection{\@dottedtocline{2}{1.5em}{2.3em}}
\renewcommand*\l@subsubsection{\@dottedtocline{3}{3.8em}{3.7em}}
\begin{document}

\maketitle

\begin{abstract}
We study the formation of singularities for the Euler-Alignment system with influence function $\psi=\frac{k_\alpha}{|x|^\alpha}$ in 1D. As in \cite{C} the problem is reduced to the analysis of a nonlocal 1D equation. We show the existence of singularities in finite time for any $\alpha$ in the range $0<\alpha<2$ in both  the real line and the periodic case.
\end{abstract}

\section{Introduction}

The Euler-Alignment system for the density $u$ and the velocity $v$ in 1D is given by
\begin{align}\label{EA}
\pa_t\rho +(v u)_x=&0\nonumber\\
\pa_t v + vv_x=&\int_{\R}\psi(|x-y|)(v(x)-v(y))u(y)dy.
\end{align}
This system is the macroscopic version (see \cite{HT}) of the Cucker-Smale model \cite{CS},
\begin{align}\label{CS}
\frac{dx_i(t)}{dt}=&v_i(t)\nonumber\\
\frac{dv_i(t)}{dt}=&\frac{1}{N}\sum_{j=1}^N \psi(|x_i(t)-x_j(t)|)(v_i(t)-v_j(t)),
\end{align}
which models the behavior of a collection of agents. In \eqref{CS}, $(x_i,v_i)$ are the position and velocity of each agent, $N$ is the total number of agents and $\psi$ is the influence function which measures the strength of the velocity alignment between two agents. In this paper we will focus on the case in which
\begin{align}\label{influence}
\psi(|x|)=\frac{k_\alpha}{|x|^{1+\alpha}},&& k_\alpha=-\frac{2^\alpha\Gamma\p{\frac{1+\alpha}{2}}}{\pi^\frac{1}{2}\Gamma\p{-\frac{\alpha}{2}}},
\end{align}
for $0<\alpha<2$.

The Euler-Alignment system \eqref{EA}-\eqref{influence} was studied  in the periodic case independently by T. Do, A. Kiselev, L. Ryzhik and C. Tan in \cite{DKRC} and by R. Shvydkoy and E. Tadmor in \cite{ST1} and \cite{ST2}. In these papers the authors show global existence of solutions for positive initial density $u_0>0$ and $0<\alpha<2$. In addition, in \cite{C}, C. Tan  showed  the existence of initial data $(u_0, v_0)$, with $u_0\geq 0$, such that density solution of \eqref{EA}-\eqref{influence}, $u(x,t)$ has not uniformly bounded $C^1$-norm for all time.

This paper is concerned with the existence of singularities in finite time for \eqref{EA}-\eqref{influence}. We will use the same observation than in \cite{C} which comes from \cite{DKRC}: by defining $G=v_x-\Lambda^\alpha u$ then \eqref{EA}-\eqref{influence} can be written in the following form
\begin{align*}
\pa_t u+(v u)_x=&0,\\
\pa_tG+(vG)_x=&0,\\
v=&G+\Lambda^\alpha u.
\end{align*}
Thus, if initially $G(x,0)=0$, $G(x,t)$ must be zero for all time and the system is reduced to the equation

\begin{align}
\pa_t u +\left(u \Lambda^{\alpha-1}Hu\right)_x=&0\label{cht}\\
u(x,0)=& u_0(x)\label{di},
\end{align}
where $0<\alpha<2$. Here, for $0<\alpha<1$,
\begin{align}\label{Lambdadefi}
\Lambda^{\alpha-1}u(x)=c_\alpha\int_{\R}\frac{u(y)}{|x-y|^{\alpha}}dy,\quad c_\alpha\equiv \frac{\Gamma\left(\frac{\alpha}{2}\right)}{\sqrt{\pi}2^{1-\alpha}\Gamma\left(\frac{1-\alpha}{2}\right)},
\end{align}
and
\begin{align*}
Hu(x)=\frac{1}{\pi}\int_{-\infty}^\infty \frac{u(y)}{x-y}dy.
\end{align*}

Therefore if one can show the existence of a singularity for \eqref{cht} in finite time one actually shows a singularity for \eqref{EA}-\eqref{influence}.

In the real line case,  P. Biler, G. Karch and R. Monneau found  the existence of self-similar solutions of \eqref{cht} which are $C^{\frac{\alpha}{2}}(\R)$. Indeed, they gave an explicit formula for the profile of these self-similar solutions, $\phi(x)=K(\alpha)(1-x^2)_+^\frac{\alpha}{2}$, where $K(\alpha)$ is a suitable constant. Their motivation was the study of the dynamics of the dislocation in a solid.

We will consider the equation \eqref{cht} in both the real line $\R$ and the circle $\T$ ($2\pi-$periodic functions) and the goal is to prove the formation of singularities in finite time from a smooth initial data. In order to do it we will impose some of the following conditions on the initial data:

\begin{enumerate}
\item [H1] In the real line setting : $u_0$ is compactly supported and $u_0(x)\geq 0$.
\item [H2] $u_0(0)=u_{0\,x}(0)=0$.
\item [H3] $u_0(x)=u_0(-x)$.
\item [H4] In the periodic case: $u_{0\,x}(x)\geq 0$ for $x\in [0,\pi]$.
\end{enumerate}

The main results of this paper are the following:

\begin{thm}\label{thmline} Let $u_0\in C^{\alpha^+}\p{\R}$, $1<\alpha<2$, satisfying H1, H2 and H3 and let $$u(x,t)\in C([0,T); C^{1+\beta^+}(\R))\cap C^1([0,T); C^{0^+}(\R))$$ a solution of \eqref{cht}, \eqref{di}. Then there exist a time $T<\infty$, such that
\begin{align*}
\lim_{t \to T^-} ||u(\cdot,t)||_{C^{\alpha^+}}=\infty.
\end{align*}
\end{thm}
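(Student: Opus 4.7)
My plan is as follows. The first step is to exploit the symmetry assumptions. By evenness H3 and the structure of \eqref{cht}, parity is preserved: if $u(\cdot,t)$ is even then $Hu(\cdot,t)$ is odd, and $\Lambda^{\alpha-1}$ preserves parity, so the transport velocity $v=\Lambda^{\alpha-1}Hu$ is odd. In particular $v(0,t)=0$, the origin is a fixed characteristic of the flow, and combining with H2 one deduces that $u(0,t)=u_x(0,t)=0$ persist for as long as the solution exists. Together with the nonnegativity $u\geq 0$ preserved by the continuity equation, this says $u(\cdot,t)$ retains a flat double zero at the origin throughout its life, making the origin the natural candidate location for the singularity.

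The second step is to introduce a scalar quantity whose finiteness is controlled by the $C^{\alpha^+}$ norm and which we shall show blows up in finite time. The natural candidate, given the scaling of \eqref{cht} and the vanishing H2, is
\begin{equation*}
F(t):=\int_\R \frac{u(x,t)}{|x|^\alpha}\,dx.
\end{equation*}
Indeed, if $w\in C^{\alpha^+}(\R)$ satisfies $w(0)=w_x(0)=0$, then Taylor expansion combined with the H\"older regularity of $w_x$ gives $|w(x)|\leq C\,\|w\|_{C^{\alpha^+}}|x|^\alpha$ for $|x|$ small, so the integrand is bounded near the origin; away from $0$ the support of $u(\cdot,t)$ is bounded (transported at finite speed from H1) and $\alpha>1$ makes $|x|^{-\alpha}$ integrable at infinity. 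Hence $F(t)\leq C(t)\,\|u(\cdot,t)\|_{C^{\alpha^+}}$ with $C(t)$ growing at most polynomially in $t$.

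The third step, which is the crux of the argument, is to show that $F$ satisfies a differential inequality forcing blow-up in finite time, of the form $\dot F(t)\geq c\,F(t)^{1+\delta}$ for some $\delta>0$. Direct computation from \eqref{cht} and integration by parts (boundary terms at $0$ and $\pm\infty$ vanish by H2 and the spatial decay) gives
\begin{equation*}
\dot F(t)=-2\alpha\int_0^\infty \frac{u(x,t)\,v(x,t)}{x^{\alpha+1}}\,dx.
\end{equation*}
One then needs to prove that $v(x,t)<0$ for small $x>0$ with a quantitative lower bound that couples back to $F$. The qualitative sign is transparent from the kernel representation
\begin{equation*}
Hu(x)=\frac{2x}{\pi}\,\mathrm{p.v.}\int_0^\infty\frac{u(y)}{x^{2}-y^{2}}\,dy,
\end{equation*}
which for $u$ nonnegative and even with a double zero at $0$ is strictly negative for small $x>0$; this oddness then passes to $v$. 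The quantitative estimate $-v(x,t)\geq c\,x\,F(t)$ (or a comparable moment of $u$) requires unpacking $\Lambda^{\alpha-1}Hu$ via its singular-integral representation, localising around scales $\sim x$, and exploiting $\alpha>1$ to gain the integrability needed for the desired bound.

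The fourth step closes the argument by contradiction: a differential inequality $\dot F\geq c\,F^{1+\delta}$ with finite $F(0)$ forces $F$ to become infinite at some $T^{*}<\infty$; by step two this is incompatible with $\|u(\cdot,t)\|_{C^{\alpha^+}}$ remaining bounded past $T^{*}$, so the H\"older norm must diverge at some $T\leq T^{*}$. The main obstacle will be step three: transporting the sign information from $Hu$, where it is transparent, to $v=\Lambda^{\alpha-1}Hu$ with a quantitative lower bound strong enough to feed back into $F$. The nonlocality of $\Lambda^{\alpha-1}$ forces one to balance contributions from all scales of $u$; the assumptions H2 and H3 are essential precisely because they furnish the cancellations and the flat double zero at the origin that make the near-origin contribution dominate.
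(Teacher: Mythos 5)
Your steps one, two and four are sound in outline (the parity/persistence claims and the Riccati-to-contradiction scheme are exactly the right skeleton, and your formula $\dot F(t)=-2\alpha\int_0^\infty u\,v\,x^{-\alpha-1}dx$ is correct), but the entire mathematical content of the theorem sits in your step three, which you only assert. The plan you sketch there has concrete obstructions. First, the sign of $v=\Lambda^{\alpha-1}Hu$ near the origin is \emph{not} inherited from the sign of $Hu$: $\Lambda^{\alpha-1}$ with $1<\alpha<2$ is a nonlocal operator of positive order and does not preserve pointwise sign; only oddness passes. One can get at the sign by writing $v=-c_\beta\Lambda^{\alpha-2}u_x$ (as the paper does, with $\beta=\alpha-1$) and integrating by parts, which yields the asymptotics $v(x)\sim -C\,x\int_0^\infty u(y)\,y^{-1-\alpha}dy$ as $x\to0^+$ — but this is only an asymptotic statement, with no uniform control on how small $x$ must be in terms of the solution. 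Second, even granting $-v(x)\geq c\,x\int_0^\infty u\,y^{-1-\alpha}dy$ on some interval $(0,x_0)$, your identity for $\dot F$ integrates $u\,v\,x^{-1-\alpha}$ over \emph{all} $x>0$, and on the region $x>x_0$ the product $u\,v$ has no sign; nothing in your proposal controls that contribution, which could overwhelm the good term. Third, the lower bound you aim for couples two different moments ($F=\int u\,|x|^{-\alpha}dx$ versus $\int u\,x^{-1-\alpha}dx$), so it does not close into $\dot F\geq cF^{1+\delta}$ without an additional interpolation (e.g.\ through $\|u\|_{L^\infty}$ and a maximum principle), none of which is set up. In short, the crucial coercivity estimate is missing, and the localisation route you propose is not obviously repairable as stated.

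For comparison, the paper never uses any pointwise sign of $v$. It works with the moment $\int_0^\infty u(x)\,x^{-(1+\alpha)}dx$ (note the weight $x^{-(1+\alpha)}$, not your $x^{-\alpha}$; this is the choice for which the estimate closes as a genuine Riccati inequality), and proves the \emph{global} integral inequality
\begin{align*}
-\int_0^\infty \frac{u(x)\,\Lambda^{\alpha-1}Hu(x)}{x^{2+\alpha}}\,dx\;\geq\;(1+\beta)\beta c_\beta\left(\int_0^\infty \frac{u(x)}{x^{1+\alpha}}\,dx\right)^2,
\end{align*}
via the Mellin transform: Parseval turns the left-hand side into $\int B(\lambda,\ep,\beta)\,U(\lambda,\ep,\beta)\,d\lambda$, and the key lemma shows that as $\ep\to0^+$ this splits into a delta-type contribution at $\lambda=0$ (which produces exactly the square of the moment) plus a remainder with a \emph{nonnegative} multiplier $B_0(\lambda,\beta)\geq0$ that can simply be discarded. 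This multiplier-positivity argument is precisely the replacement for your ``localise around scales $\sim x$ and control the rest'' step, and it is the part your proposal leaves unproven.
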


\begin{thm}\label{thmperiodic} Let $u_0\in C^{1}\p{\T}$, $0 <\alpha<1$,  satisfying  H2, H3 and H4, and let
$$u(x,t)\in C([0,T); C^{1+\beta^+}(\R))\cap C^1([0,T); C^{0^+}(\R))$$ a solution of \eqref{cht}, \eqref{di}.

Let $$\int_{\R}x^{-1-\alpha}u_0(x)dx$$ be large enough with respect to $||u_0||_{L^\infty}$ and $\frac{1}{\alpha}$.

Then there exists a time $T<\infty$ such that
\begin{align*}
\lim_{t \to T^-} ||u(\cdot,t)||_{C^{1}}=\infty.
\end{align*}
\end{thm}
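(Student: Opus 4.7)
\begin{proofsth}{Theorem \ref{thmperiodic}}

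The plan is to track the weighted mass
\[
F(t):=\int_{0}^{\pi}\frac{u(x,t)}{x^{1+\alpha}}\,dx,
\]
which is finite by H2 (so that $u(x,t)=O(x^{1+\beta^+})$ near $0$), and to derive a Riccati-type differential inequality
\[
F'(t)\ \ge\ c_\alpha\, F(t)^2 \,-\, C(\|u_0\|_{L^\infty},\alpha)\, F(t),
\]
which forces finite-time blow-up as soon as $c_\alpha F(0)>C$; the latter is guaranteed by the largeness assumption on $\int x^{-1-\alpha} u_0(x)\,dx$ relative to $\|u_0\|_{L^\infty}$ and $1/\alpha$. Before computing with $F$ I would check that the structural hypotheses are preserved. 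By H3 the solution stays even, and by periodicity it is also symmetric about $\pi$; hence $v=\Lambda^{\alpha-1}H u$ is odd about both $0$ and $\pi$, so that $v(0,t)=v(\pi,t)=0$ and in particular the trajectory through $x=0$ is stationary, preserving H2. Using the Fourier identity $v_x=\Lambda^\alpha u$ (a consequence of $H\Lambda=-\partial_x$ and $H^2=-I$), equation \eqref{cht} becomes $u_t+v u_x=-u\Lambda^\alpha u$; evaluated at a maximum of $u(\cdot,t)$ this gives $(u_{\max})'\le 0$, hence $\|u(\cdot,t)\|_{L^\infty}\le\|u_0\|_{L^\infty}$. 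Nonnegativity of $u$ is preserved by transport.

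Differentiating $F$ using the conservation form $u_t+(uv)_x=0$ and integrating by parts,
\[
F'(t)=-\Big[\frac{uv}{x^{1+\alpha}}\Big]_{0}^{\pi}-(1+\alpha)\int_{0}^{\pi}\frac{uv}{x^{2+\alpha}}\,dx.
\]
The boundary term at $x=0$ vanishes by H2 (near $0$, $uv=O(x^{2+\beta^+})$), and at $x=\pi$ by $v(\pi,t)=0$. Thus $F'(t)=-(1+\alpha)\int_{0}^{\pi} u v/x^{2+\alpha}\,dx$, and the task is to lower-bound this integral by $c_\alpha F^2 - C F$.

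The core estimate is a pointwise upper bound on $v$ near the origin. Writing $v(x)=\int_0^x\Lambda^\alpha u(s)\,ds$, one first shows
\[
\Lambda^\alpha u(0)=-C_\alpha\,\mathrm{p.v.}\!\!\int\frac{u(y)}{|y|^{1+\alpha}_{\mathrm{per}}}\,dy\le -c_\alpha F(t),
\]
using the evenness of $u$, the fact that $|2\sin(y/2)|\le|y|$ on $(-\pi,\pi)$, and $u(0)=0$. A continuity estimate for $\Lambda^\alpha u$---whose error depends only on $\|u_0\|_{L^\infty}$ and $\alpha$ via H2---then gives $\Lambda^\alpha u(s)\le -c_\alpha F(t)+C(\|u_0\|_{L^\infty},\alpha)$ on some interval $[0,\delta]$, and hence $v(x)\le -c_\alpha F(t)\,x+C x$ there. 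Inserting this into the expression for $F'$ and splitting the integral as $\int_0^\delta+\int_\delta^\pi$, the near-origin piece produces the leading $c_\alpha F(t)^2$ term (by multiplying $-v/x^{1+\alpha}$ by $u/x^{1+\alpha}$ and integrating), while the far piece is estimated by $C(\|u_0\|_{L^\infty},\alpha)\,F(t)$ using the bound $\|v\|_{L^\infty}\le C(\alpha)\|u_0\|_{L^\infty}$ that follows from $\Lambda^{\alpha-1}H$ being of negative order. The monotonicity hypothesis H4 is used here to fix signs in the kernel decomposition of $v$ on the positive half-period.

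The main obstacle is precisely the continuity estimate for $\Lambda^\alpha u$ near $0$: any residual dependence on $\|u_x\|_{L^\infty}$ would turn the Riccati argument circular, so one must exploit H2 (vanishing of $u$ and $u_x$ at the origin) to absorb the would-be singular contributions using only the $L^\infty$ norm. Once $F'\ge c_\alpha F^2-C F$ is established, if $F(0)>C/c_\alpha$ the ODI forces $F(t)\to\infty$ at some $T<\infty$. Finally, since $u(0,t)=0$ and $u(\cdot,t)\in C^1$, one has $u(x,t)\le\|u_x(\cdot,t)\|_{L^\infty}\,x$, whence $F(t)\le\|u_x(\cdot,t)\|_{L^\infty}\,\pi^{1-\alpha}/(1-\alpha)$, and the blow-up of $F$ entails $\|u(\cdot,t)\|_{C^1}\to\infty$ as $t\to T^-$.

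\end{proofsth}
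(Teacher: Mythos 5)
Your endgame (a Riccati inequality $F'\ge C_1F^2-C_2\|u_0\|_{L^\infty}F-C_3\|u_0\|^2_{L^\infty}/\alpha$ for a weighted mass, largeness of $F(0)$, and the contradiction with $F\le C\|u\|_{C^1}$) is exactly the paper's, but the step you yourself flag as ``the main obstacle'' is a genuine gap, and it is where the whole theorem lives. The pointwise bound $\Lambda^\alpha u(s)\le -c_\alpha F(t)+C(\|u_0\|_{L^\infty},\alpha)$ on an interval $[0,\delta]$ cannot hold with constants depending only on $\|u_0\|_{L^\infty}$ and $\alpha$: take $u$ even, $C^1$, satisfying H2--H4, equal to $0$ on $[0,a]$ and equal to its maximum $h$ on $[a+\eta,\pi]$, with $a+\eta<\delta$. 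At a point $s\in(a+\eta,\delta)$ where $u$ attains its maximum one has $\Lambda^\alpha u(s)\ge 0$ (indeed the near-field part $\int_0^a\p{u(s)-u(y)}|s-y|^{-1-\alpha}dy\gtrsim h\eta^{-\alpha}$ is arbitrarily large positive), while $F\sim h a^{-\alpha}/\alpha$ can be made arbitrarily large relative to $\|u_0\|_{L^\infty}=h$ by shrinking $a$; so the claimed bound fails precisely in the large-$F$ regime the Riccati argument needs, and letting $\delta$ or the constants depend on the modulus of continuity of $u_x$ makes the argument circular, as you note. What your scheme really requires is the integrated bound $v(x)\le -c_\alpha F(t)\,x+C\|u_0\|_{L^\infty}x$, and there the difficulty reappears as a battle of constants: for $y<x$ the kernel $\overline{c}_\alpha\p{\sign(x-y)|x-y|^{-\alpha}+|x+y|^{-\alpha}}$ representing $v$ is positive, and when $u$ concentrates near the origin its contribution is generically of size $\|u_0\|_{L^\infty}x^{1-\alpha}$, the same order in $x$ as the favorable term $-c_\alpha Fx$; moreover an error of size $\|u_0\|_{L^\infty}x^{1-\alpha}$ inserted into $\int_0^\delta u\,x^{-2-\alpha}(\cdot)dx$ produces $\|u_0\|_{L^\infty}\int_0^\delta u\,x^{-1-2\alpha}dx$, which is not controlled by $F$ and $\|u_0\|_{L^\infty}$ and can itself be of order $F^2$. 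So whether the good constant survives is exactly the content of the theorem, and the proposal offers no mechanism to decide it.

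This is the point where the paper abandons pointwise reasoning. It converts $-\int_0^\infty u\,\Lambda^{\alpha-1}Hu\,x^{-2-\alpha}dx$ by Mellin--Parseval into $\int A(\lambda,\ep,\alpha)U(\lambda,\ep,\alpha)d\lambda$; lemma \ref{mainlemma} extracts the positive mass $2\pi\alpha\overline{c}_\alpha\,U(0,0,\alpha)$ at $\lambda=0$ plus a remainder with a \emph{negative} bounded multiplier $A_0$ (the spectral incarnation of the harmful near-field contributions above); the monotonicity H4 enters through lemma \ref{hurwitz}, a Hurwitz-zeta-type estimate giving $|M[x^{-\alpha}u](\lambda)|\le \frac{\alpha}{\sqrt{\lambda^2+\alpha^2}}F+\frac{C+C_\epsilon|\lambda|^{1-\alpha+\epsilon}}{\sqrt{\lambda^2+\alpha^2}}\|u\|_{L^\infty}$, which together with the decay lemmas \ref{decaytotal} and \ref{decay} reduces everything to the sign of $C_1(\alpha)=\frac{1+\alpha}{2\pi}\p{2\pi\alpha\overline{c}_\alpha+\int_{-\infty}^\infty A_0(\lambda,\alpha)\frac{\alpha^2}{\lambda^2+\alpha^2}d\lambda}$, shown to be strictly positive for $0<\alpha<1$ by an explicit computation comparing two integrals. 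That strict inequality is the quantitative fact your proposal needs and does not supply; without it, or a physical-space substitute of comparable precision, the differential inequality and hence the blow-up conclusion do not follow.
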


The proof of theorem \ref{thmline} will be given in section \ref{line} and the prooof of theorem \ref{thmperiodic} in section \ref{periodic}.

We emphasis that theorem \ref{thmline} works in the range $1<\alpha<2$ and theorem \ref{thmperiodic} in the range $0<\alpha<1$. Let us explain why we work in different settings for different ranges of $\alpha$. Theorem \ref{thmline} could be proven in the range $0 <\alpha<1$ but removing the condition $H1$. Indeed we would have to impose the condition $u_{0\,x}\geq 0$ for $x\geq 0$, which means that $u_0$ does not vanish at infinity. Even one can guest that equation \eqref{cht} make sense in spaces including these kind of initial data, the need of  the lack of decay at infinity (infinite mass) is  something that one would like to avoid. Because of that we also prove the existence of singularities in the range $0<\alpha<1$ in the periodic case where the mass $\int_{\T} u_0 dx$ is finite. The existence of singularities  should be true in the range $1<\alpha<2$ in the periodic case without any condition on the monotonicity of the initial data nor on the size of the initial data. The proof would be given by a combination of the proofs of theorems \ref{thmline} and \ref{thmperiodic}. We do not include it here for the sake of simplicity.

\section{The case $\alpha=1$}\label{alpha1}

The case $\alpha=1$ have already been studied in different context. In  \cite{CCCF} D. Chae, A. C\'ordoba, D. C\'ordoba and M. A. Fontelos introduced equation \ref{cht} as 1D model of the Surface Quasi-Geostrophic equation. They proved the existence of singularities for some initial data with non zero negative part. In \cite{CC} D. C\'ordoba and the second author showed global existence and gain of analyticity for $u_0>0$, ill-posedness  in $H^{\frac{3}{2}^+}$ for $u_0$ with non zero negative part and local existence and singularity formation for $u_0\geq 0$ with some zero.

A higher dimensional version of \eqref{cht} with $\alpha=1$ was introduced by L. Caffarelli and J. L. V\'azquez in \cite{CV1} as a model of the dynamics of a gas in a porous media with a non local pressure. They studied the existence of weak solutions for initial data in $L^1$. See also \cite{CV2} for a proof of a regularizing effect. In  \cite{CSV}, L. Caffarelli, F. Soria and J.L. V\'azquez studied the case $0<\alpha<2$.

In \cite{CFP} J. A. Carrillo, L. Ferreira and J. Precioso  explore the gradient flow structure of \eqref{cht} with $\alpha=1$ (see also \cite{F}). For some further results concerning existence and singularity formation for related equations one can check \cite{LR}, by D. Li and J.L. Rodrigo, \cite{G}, by R. Granero-Belinch\'on, and \cite{BGL}, by H. Bae, R. Granero-Belinch\'on and O. Lazar, and references therein.

We shall present a blow up proof for equation \eqref{cht} with $\alpha=1$. This result was already proven in \cite{CC} (actually in \cite{CC} is proven a more general result) but we will include it here for the clarity of the exposition.

We take the initial data $u_0$ satisfying hypothesis H2 and H3. Thus the solution $u(x,t)$ also satisfies H2 and H3.

By using the identity
\begin{align*}
H(uHu)=\frac{1}{2}\p{Hu}^2-\frac{1}{2}u^2
\end{align*}
we get
\begin{align*}
\pa_t \Lambda u + \p{\Lambda u}^2+ Hu Hu_{xx}-u u_{xx}-\p{u_x}^2=0.
\end{align*}
By evaluating at $x=0$ yields
\begin{align*}
\pa_t \Lambda u(0,t)=-\p{\Lambda u (0,t)}^2 \quad \p{\Lambda u_0(0)<0},
\end{align*}
thus $\Lambda u(0,t)$ become infinity in finite time.

We shall emphasis that we have used hypothesis H2 and H3 to yields the inequality
\begin{align*}
-\int_{0}^\infty\frac{u(x)Hu(x)}{x^3}dx=\frac{1}{\pi}\p{\int_{0}^\infty \frac{u(x)}{x^2}dx}^2
\end{align*}
To show theorems \ref{thmline} and \ref{thmperiodic} we will prove a generalization of the previous inequality for $\alpha\neq 1$. We remark that this inequality is also valid in the periodic case.

\section{The case $1<\alpha<2$}\label{line}

In this section we will prove theorem \ref{thmline}.

We will take $\beta=\alpha-1$ and then we consider
\begin{align*}
\pa_t u +\left(u\Lambda^\beta H u \right)_x=0
\end{align*}
for $0<\beta<1$.

We  take $u_0\in C^{1+\beta^+}(\R)$ and
 we  assume that there exists a solution $u(x,t)\in C([0,\infty); C^{1+\beta^+}(\R))\cap C^1([0,\infty); C^{0^+}(\R))$. It can be checked that, then, $u(x,t)$ satisfies H1, H2 and H3 for all $t\in [0,\infty)$.

Multiplying equation \eqref{cht} by $x^{-(2+\beta)}$ and integrating from 0 to $\infty$ we have that
\begin{align}\label{uno}
\frac{d}{dt}\int_{0}^\infty \frac{u(x)}{x^{2+\beta}}dx = -\int_{0}^\infty\frac{\left(u(x) \Lambda^{\beta}Hu(x)\right)_x}{x^{2+\beta}}dx
=-(2+\beta)\int_{0}^\infty \frac{u(x)\Lambda^{\beta} H(u)}{x^{3+\beta}}dx.
\end{align}

Inspired by \cite{CCF},  where A. C\'ordoba, D. C\'ordoba and M.A. Fontelos showed the inequality
\begin{align}\label{CCFi}
-\int_{0}^\infty\frac{Hu(x) u_x(x)}{x^{1+\delta}}dx\geq K_\delta\p{\int_{\R}\frac{u(x)}{x^{2+\delta}}dx}^2,
\end{align}
for an even $C^1$-function compactly supported and $0<\delta<1$,  we will deal with the last integral in \eqref{uno} by using the Mellin Transform. Actually, in the following we will deal with a kind of endpoint of \eqref{CCFi}. Let us remind the definition and Parseval's identity of this transformation:
\begin{align*}
M[u](\lambda)=&\int_{0}^\infty x^{i\lambda-1}u(x)dx,\\
\int_{0}^\infty u(x)v(x)\frac{dx}{x}=&\frac{1}{2\pi}\int_{-\infty}^\infty M[u](\lambda)\overline{M[v](\lambda)} d\lambda.
\end{align*}

Thus
\begin{align*}
-(2+\beta)\int_{0}^\infty \frac{u(x)\Lambda^{\beta} H(u)}{x^{3+\beta}}dx=-\frac{(2+\beta)}{2\pi}\int_{-\infty}^\infty \overline{M[x^{-1+\ep}\Lambda^\beta Hu]}(\lambda)M[x^{-1-\beta-\ep}u](\lambda)d\lambda
\end{align*}
for any $0<\ep<\beta^+-\beta$.

We will use that $\Lambda^\beta H u =\Lambda^{\beta-1}\Lambda H u = -\Lambda^{\beta-1}u_x$. Thus

\begin{align*}
&M[x^{-1+\ep}\Lambda^\beta H u](\lambda)=-\int_{0}^\infty x^{i\lambda+\ep-2}\Lambda^{\beta-1}u_x(x)dx\\
&=-c_\beta\int_{0}^\infty x^{i\lambda+\ep-2}\int_{0}^\infty u_y(y)\left(\frac{1}{|x-y|^\beta}-\frac{1}{|x+y|^\beta}\right) dy dx\\
&=-c_\beta\int_{0}^\infty u_y(y)\int_{0}^\infty x^{i\lambda+\ep-2}\left(\frac{1}{|x-y|^\beta}-\frac{1}{|x+y|^\beta}\right) dx dy\\
&=-\int_{0}^\infty y^{i\lambda+\ep-\beta-1}u_y(y)c_\beta\int_{0}^\infty x^{i\lambda+\ep-2}\left(\frac{1}{|x-1|^\beta}-\frac{1}{|x+1|^\beta}\right) dx dy\\
&=m(\lambda,\ep,\beta)(i\lambda+\ep-1-\beta)M[x^{\ep-\beta-1}u](\lambda),
\end{align*}
where
\begin{align}\label{funcionm}
m(\lambda,\ep,\beta)=c_\beta\int_{0}^\infty x^{i\lambda+\ep-2}\p{\frac{1}{|x-1|^\beta}-\frac{1}{|x+1|^\beta}}dx.
\end{align}

Therefore
\begin{align*}
-(2+\beta)\int_{0}^\infty \frac{u(x)\Lambda^{\beta} H(u)}{x^{3+\beta}}dx=\frac{2+\beta}{2\pi}\int_{-\infty}^\infty B(\lambda,\ep,\beta)\overline{M[x^{\ep-1-\beta}u]}(\lambda)M[x^{-\ep-1-\beta}u](\lambda)d\lambda,
\end{align*}
where
\begin{align*}
B(\lambda,\ep,\beta)=\overline{m}(\lambda,\ep,\beta)(i\lambda-\ep+1+\beta).
\end{align*}

\begin{lemma}\label{mainlemma2} Let $u\in C_c^{1+\beta^+}(\R)$ such that $u(0)=u_x(0)=0$ and
\begin{align*}
U(\lambda,\ep,\beta)=\overline{M[x^{\ep-1-\beta}u]}(\lambda)M[x^{-\ep-1-\beta}u](\lambda),
\end{align*}
with $0<\beta<1$ and $0<\ep<\beta^+-\beta$ is a bounded function.

Then
\begin{align*}
\lim_{\ep\to 0^+} \int_{-\infty}^\infty B(\lambda,\ep,\beta)U(\lambda,\ep,\beta)=2\pi(1+\beta)\beta c_\beta U(0,0,\beta)+\int_{-\infty}^\infty B_0(\lambda,\beta) U(\lambda,0,\beta)d\lambda.
\end{align*}
where  $B_0(\lambda,\beta)\geq 0$.
\end{lemma}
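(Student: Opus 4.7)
The plan is to compute $m(\lambda,\ep,\beta)$ in closed form using Beta-function identities, isolate its unique pole on the critical line $\mathrm{Re}(s)=-1$, and then pass to the distributional limit with the Sokhotski--Plemelj formula. The positivity of the residual kernel $B_0$ is extracted in the spirit of the Córdoba-Córdoba-Fontelos argument behind \eqref{CCFi}.

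First I would split the integral in \eqref{funcionm} at $x=1$ and substitute $x=1/y$ in the outer piece. Using $\int_0^1 y^{a-1}(1-y)^{-\beta}\,dy=B(a,1-\beta)$ on the resulting Beta integrals gives, for $0<\mathrm{Re}(s)<\beta$,
\begin{align*}
F(s):=\int_0^\infty x^{s-1}\bigl[|x-1|^{-\beta}-(x+1)^{-\beta}\bigr]\,dx=\Gamma(1-\beta)\!\left[\frac{\Gamma(s)}{\Gamma(s+1-\beta)}+\frac{\Gamma(\beta-s)}{\Gamma(1-s)}\right]-\frac{\Gamma(s)\Gamma(\beta-s)}{\Gamma(\beta)},
\end{align*}
so that $m(\lambda,\ep,\beta)=c_\beta F(i\lambda+\ep-1)$. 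Since $|x-1|^{-\beta}-(x+1)^{-\beta}=2\beta x+O(x^3)$ near $x=0$, the integral actually converges absolutely for $-1<\mathrm{Re}(s)<\beta+1$, and the only pole on the line $\mathrm{Re}(s)=-1$ lies at $s=-1$, i.e.\ $(\lambda,\ep)=(0,0)$; a direct residue computation using $\Gamma(1-\beta)=-\beta\,\Gamma(-\beta)$ gives residue $2\beta$. Writing $s=i\lambda+\ep-1$, this produces the decomposition
\begin{align*}
m(\lambda,\ep,\beta)=\frac{2\beta c_\beta}{\ep+i\lambda}+c_\beta F_r(i\lambda+\ep-1),
\end{align*}
with $F_r$ continuous in $(\lambda,\ep)$ up to $\ep=0$.

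The reality of the integrand of $F$ gives $\overline{F(s)}=F(\bar s)$, so conjugating and using the algebraic identity
\begin{align*}
\frac{i\lambda-\ep+1+\beta}{\ep-i\lambda}=-1+\frac{1+\beta}{\ep-i\lambda}
\end{align*}
yields
\begin{align*}
B(\lambda,\ep,\beta)=-2\beta c_\beta+\frac{2\beta(1+\beta)c_\beta}{\ep-i\lambda}+c_\beta\,\overline{F_r(i\lambda+\ep-1)}\,(i\lambda-\ep+1+\beta).
\end{align*}
Next I would pass to the limit $\ep\to 0^+$ under the integral. The Sokhotski--Plemelj formula
$\lim_{\ep\to 0^+}(\ep-i\lambda)^{-1}=\pi\delta(\lambda)+i\,\mathrm{PV}(1/\lambda)$,
combined with the fact that $U(\lambda,0,\beta)=|M[x^{-1-\beta}u](\lambda)|^2$ is real and even in $\lambda$, makes the principal value piece vanish by parity, delivers exactly $2\pi(1+\beta)\beta c_\beta U(0,0,\beta)$ from the delta, and leaves
\begin{align*}
B_0(\lambda,\beta):=-2\beta c_\beta+c_\beta\,\mathrm{Re}\!\left[\overline{F_r(i\lambda-1)}\,(i\lambda+1+\beta)\right]
\end{align*}
as the remaining integrand against $U(\lambda,0,\beta)$. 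The dominated convergence justifying passage to the limit comes from Stirling, which yields $|B(\lambda,\ep,\beta)|\lesssim|\lambda|^{\beta}$ as $|\lambda|\to\infty$, combined with the assumed boundedness of $U$ and the integrable decay of $U$ at infinity provided by the $C^{1+\beta^+}$ regularity of $u$.

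The main obstacle is the pointwise positivity $B_0(\lambda,\beta)\geq 0$. Here I would use the explicit formula for $F_r$ together with the reflection identity $\Gamma(z)\Gamma(1-z)=\pi/\sin(\pi z)$ and the recurrence $\Gamma(z+1)=z\Gamma(z)$ to rewrite $B_0$ along $\mathrm{Re}(s)=-1$ in a manifestly non-negative form, with the explicit constant $c_\beta$ in \eqref{Lambdadefi} being exactly what makes the constant term $-2\beta c_\beta$ balance the Gamma-quotient real part. This is the endpoint analogue of the algebraic positivity underlying the Córdoba-Córdoba-Fontelos inequality \eqref{CCFi} in \cite{CCF} and represents the central technical point of the lemma.
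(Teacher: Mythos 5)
Your closed-form evaluation of the Mellin symbol is correct: the Beta-integral computation does give the stated $F(s)$, the integral indeed extends to the strip $-1<\mathrm{Re}(s)<\beta+1$, the residue of $F$ at $s=-1$ is $2\beta$ (the spurious poles of the individual Gamma terms at $s=0$ and $s=\beta$ cancel), and the Sokhotski--Plemelj step reproduces exactly the constant $2\pi(1+\beta)\beta c_\beta U(0,0,\beta)$. This is a genuinely different, and for the delta contribution arguably cleaner, route than the paper's, which never computes $m(\lambda,\varepsilon,\beta)$ in closed form but instead integrates by parts in \eqref{funcionm} and identifies the approximate identity $\varepsilon/(\lambda^2+\varepsilon^2)$ by hand. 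Still, there is a technical gap in the limit passage: Sokhotski--Plemelj applies to a fixed test function, whereas your integrand carries $U(\lambda,\varepsilon,\beta)$, which itself depends on $\varepsilon$. Dominated convergence with the bound $|B|\lesssim|\lambda|^{\beta}$ does not handle the pairing of the singular factor $1/(\varepsilon-i\lambda)$ with $U(\lambda,\varepsilon,\beta)-U(\lambda,0,\beta)$ near $\lambda=0$; the paper isolates this as $I_2(\varepsilon)$ and disposes of it using parity ($\mathrm{Re}\,U$ even, $\mathrm{Im}\,U$ odd, $\mathrm{Re}\,B$ even) together with the representation \eqref{IMU}, which supplies the extra factor of $\lambda$ or of $\sin(\lambda\log(x/y))$ needed against $\lambda/(\lambda^2+\varepsilon^2)$. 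Some argument of this kind must appear in your proof as well.

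The more serious gap is the positivity $B_0(\lambda,\beta)\geq 0$, which you yourself call the central technical point but do not prove: you only assert that the reflection formula and the recurrence for $\Gamma$ ``would'' rewrite $B_0=-2\beta c_\beta+c_\beta\,\mathrm{Re}\bigl[\overline{F_r(i\lambda-1)}\,(i\lambda+1+\beta)\bigr]$ in a manifestly non-negative form, and that $-2\beta c_\beta$ ``balances'' the Gamma-quotient real part --- but that balance is precisely what has to be demonstrated, and it is not evident from the closed formula. The paper's treatment of this point is a substantive argument in its own right: it goes back to the $x$-integral, symmetrizes by $x\mapsto 1/x$, integrates by parts twice to obtain $B_0(\lambda,\beta)=\frac{c_\beta}{\lambda^2}\int_0^1\bigl(1-\cos(\lambda\log x)\bigr)\,\partial_x\bigl(xG_0(x,\beta)\bigr)\,dx$, and then verifies $\partial_x G_0(x,\beta)\geq 0$ through the explicit function $f(x,\beta)$ and the sign of $\partial_s f(sx,\beta)$. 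Until you either carry out the Gamma-identity manipulation explicitly or reproduce an argument of this type, the key conclusion of the lemma remains unproved.
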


\begin{cor}\label{maincoro} Let $u(x)$ be an odd function satisfying the assumptions of lemma \ref{mainlemma2}. Then the following estimate holds
\begin{align*}
-\int_{0}^\infty \frac{u(x)\Lambda^{\beta} H(u)}{x^{3+\beta}}dx\geq (1+\beta)\beta c_\beta\left(\int_{0}^\infty \frac{u(x)}{x^{2+\beta}}dx\right)^2
\end{align*}
\end{cor}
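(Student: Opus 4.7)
The plan is to invoke Lemma~\ref{mainlemma2} directly in the Mellin representation that was derived immediately above its statement, and then read off the desired lower bound from two obvious positivity properties.

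First I would take the identity
\begin{align*}
-(2+\beta)\int_{0}^\infty \frac{u(x)\Lambda^{\beta}Hu(x)}{x^{3+\beta}}\,dx = \frac{2+\beta}{2\pi}\int_{-\infty}^\infty B(\lambda,\ep,\beta)\,U(\lambda,\ep,\beta)\,d\lambda,
\end{align*}
divide by $(2+\beta)$, and let $\ep \to 0^+$. Applying Lemma~\ref{mainlemma2} to the right-hand side (the boundedness of $U(\lambda,\ep,\beta)$ required by the lemma is inherited from the hypotheses $u\in C_c^{1+\beta^+}(\mathbb{R})$ and $u(0)=u_x(0)=0$) produces the exact identity
\begin{align*}
-\int_{0}^\infty \frac{u(x)\Lambda^{\beta}Hu(x)}{x^{3+\beta}}\,dx = (1+\beta)\beta c_\beta\, U(0,0,\beta) + \frac{1}{2\pi}\int_{-\infty}^\infty B_0(\lambda,\beta)\,U(\lambda,0,\beta)\,d\lambda.
\end{align*}

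Next I would observe that both factors in the integrand on the right are pointwise non-negative. The lemma already supplies $B_0(\lambda,\beta)\ge 0$, and setting $\ep=0$ in the definition of $U$ collapses it to a modulus squared,
\begin{align*}
U(\lambda,0,\beta) = \overline{M[x^{-1-\beta}u]}(\lambda)\,M[x^{-1-\beta}u](\lambda) = \bigl|M[x^{-1-\beta}u](\lambda)\bigr|^2 \ge 0.
\end{align*}
Consequently the integral term is non-negative and can be dropped from the lower bound.

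Finally I would evaluate the boundary contribution at $\lambda = 0$ straight from the definition of the Mellin transform,
\begin{align*}
M[x^{-1-\beta}u](0) = \int_0^\infty x^{-1}\cdot x^{-1-\beta}u(x)\,dx = \int_0^\infty \frac{u(x)}{x^{2+\beta}}\,dx,
\end{align*}
which gives $U(0,0,\beta) = \bigl(\int_0^\infty u(x)/x^{2+\beta}\,dx\bigr)^2$ and therefore the estimate claimed by the corollary. There is in effect no genuine obstacle at this stage: all of the analytic difficulty (extracting the delta-type contribution at $\lambda=0$ in the limit $\ep\to 0^+$ and verifying the sign of $B_0$) has been absorbed into Lemma~\ref{mainlemma2}, and the role of the corollary is simply to discard the non-negative residual integral and unwrap $U(0,0,\beta)$.
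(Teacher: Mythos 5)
Your proposal is correct and follows essentially the same route as the paper: the corollary is obtained exactly by combining the $\ep$-independent Mellin identity with Lemma \ref{mainlemma2}, discarding the non-negative term $\frac{1}{2\pi}\int_{-\infty}^\infty B_0(\lambda,\beta)U(\lambda,0,\beta)\,d\lambda$ (since $B_0\ge 0$ and $U(\lambda,0,\beta)=|M[x^{-1-\beta}u](\lambda)|^2\ge 0$), and identifying $U(0,0,\beta)=\left(\int_0^\infty x^{-2-\beta}u(x)\,dx\right)^2$. No further comment is needed.
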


\begin{proof}

We will split the integral

 \begin{align}\label{I}
 I(\ep)=\int_{-\infty}^\infty B(\lambda,\ep,\beta)U(\lambda,\ep,\beta)d\lambda
 \end{align}
in two parts
\begin{align}\label{I1}
I_1(\ep)= \int_{-\infty}^\infty B(\lambda,\ep,\beta)U(\lambda,0,\beta)d\lambda,
\end{align}
and
\begin{align}\label{I2}
I_2(\ep)= \int_{-\infty}^\infty B(\lambda,\ep,\beta)\left(U(\lambda,\ep,\beta)-U(\lambda,0,\beta)\right)d\lambda,
\end{align}
thus $I(\ep)=I_1(\ep)+I_2(\ep)$. Here we recall that $U(\lambda,0,\beta)=|M[x^{-1-\beta} u]|^2(\lambda)$ is real and even function on $\lambda$.

We first pass to the limit $\ep\to 0^+$ in $I_1(\ep)$. We split $B(\lambda,\ep,\beta)$ into two parts
\begin{align*}
&B(\lambda,\ep,\beta)=(i\lambda-\ep+1+\beta)\int_{0}^\infty...\, dx\\& =(i\lambda-\ep+1+\beta)\left( \int_{0}^\frac{1}{2}...\, dx +\int_{\frac{1}{2}}^\infty...\, dx \right)\equiv  B_1(\lambda,\ep,\beta)+B_2(\lambda,\ep,\beta).\end{align*}
 With $B_1$ we proceed as follows. We first write
\begin{align*}
B_1(\lambda,\ep,\beta)=\frac{i\lambda-\ep+1+\beta}{-i\lambda+\ep-1}\int_0^\frac{1}{2} \pa_x x^{-i\lambda+\ep-1}P(x,\beta)dx,
\end{align*}
where $P(x,\beta)=c_\beta\left(\frac{1}{(1-x)^\beta}-\frac{1}{(1+x)^\beta}\right)$ is a smooth function on $x\in[0,\frac{1}{2}]$. Then, taking into account that $P(0,\beta)=0$, an integration by parts yields
\begin{align*}
B_1(\lambda,\ep,\beta)=-\frac{i\lambda-\ep+1+\beta}{-i\lambda+\ep-1}\int_0^\frac{1}{2}  x^{-i\lambda+\ep-1}P(x,\beta)_x dx+\frac{i\lambda-\ep+1+\beta}{-i\lambda+\ep-1}2^{i\lambda-\ep+1}P\left(\frac{1}{2},\beta\right).
\end{align*}
Integrating again by parts we have that

\begin{align*}
&B_1(\lambda,\ep,\beta)=\frac{i\lambda-\ep+1+\beta}{(-i\lambda+\ep-1)(-i\lambda+\ep)}\int_0^\frac{1}{2}  x^{-i\lambda+\ep}P(x,\beta)_{xx} dx-\frac{i\lambda-\ep+1+\beta}{(-i\lambda+\ep-1)(-i\lambda+\ep)}2^{i\lambda-\ep}P_{x}\left(\frac{1}{2},\beta\right)
\\&+\frac{-i\lambda-\ep+1+\beta}{-i\lambda+\ep-1}2^{i\lambda-\ep+1}P\left(\frac{1}{2},\beta\right)\equiv B_{11}(\lambda,\ep,\beta)+B_{12}(\lambda,\ep,\beta),
\end{align*}
with
\begin{align*}
&B_{11}(\lambda,\ep,\beta)=\frac{i\lambda-\ep+1+\beta}{(-i\lambda+\ep-1)(-i\lambda+\ep)}\left(\int_{0}^\frac{1}{2}x^{-i\lambda+\ep}P(x,\beta)_{xx} dx-2^{i\lambda-\ep}P_{x}\left(\frac{1}{2},\beta\right)\right),\\
&B_{12}(\lambda,\ep,\beta)=\frac{i\lambda-\ep+1+\beta}{-i\lambda+\ep-1}2^{i\lambda-\ep+1}P\left(\frac{1}{2},\beta\right).
\end{align*}

Next we consider the term in $I_1(\ep)$ involving $B_{11}$. Since $B_{11}$ is an odd function on $\lambda$ (and $U(\lambda,0,\beta)$ even) we have that
\begin{align*}
\int_{-\infty}^\infty B_{11}(\lambda,\ep,\beta)U(\lambda,0,\beta)d\lambda=\int_{-\infty}^\infty \text{Re}\left( B_{11}(\lambda,\ep,\beta)\right)U(\lambda,0,\beta)d\lambda.
\end{align*}

We notice that
\begin{align*}
&\frac{i\lambda-\ep+1+\beta}{(-i\lambda+\ep-1)(-i\lambda+\ep)}=-\frac{1}{-i\lambda+\ep}+\beta\frac{1}{(-i\lambda+\ep-1)(-i\lambda+\ep)}\\
&=-\frac{\ep+i\lambda}{\lambda^2+\ep^2}+\beta\frac{-\lambda^2+i\lambda(2\ep-1)+\ep(\ep-1)}{(\lambda^2+(\ep-1)^2)(\lambda^2+\ep^2)},\\
&=-\frac{\ep}{\lambda^2+\ep^2}+\beta\frac{-\lambda^2+\ep^2-\ep}{(\lambda^2+(\ep-1)^2)(\lambda^2+\ep^2)}+i\left(-\frac{\lambda}{\lambda^2+\ep^2}
+\frac{\lambda(2\ep-1)\beta}{(\lambda^2+(\ep-1)^2)(\lambda^2+\ep^2)}\right)\\
=&-\left(1+\frac{\beta}{\lambda^2+(1-\ep)^2}\right)\frac{\ep}{\lambda^2+\ep^2}+i\left(-1+\frac{(2\ep-1)\beta}{\lambda^2+(\ep-1)^2}\right)\frac{\lambda}{\lambda^2+\ep^2}
+\beta\frac{-\lambda^2+\ep^2}{(\lambda^2+(\ep-1)^2)(\lambda^2+\ep^2)}.
\end{align*}
Using this last expression we have that
\begin{align*}
&\text{Re}\left(B_{11}(\lambda,\ep,\beta)\right)\\&=-\left(1+\frac{\beta}{\lambda^2+(1-\ep)^2}\right)\frac{\ep}{\lambda^2+\ep^2}
\left(\int_{0}^\frac{1}{2}\cos\left(\lambda\log(x)\right)x^{-\ep}P_{xx}(x,\beta)dx-\cos(\lambda\log(2))2^{-\ep}P_x\left(\frac{1}{2},\beta\right)\right)\\
&+\left(-1+\frac{(2\ep-1)\beta}{\lambda^2+(\ep-1)^2}\right)\frac{\lambda}{\lambda^2+\ep^2}\left(\int_{0}^\frac{1}{2}
\sin\left(\lambda\log(x)\right)x^{-\ep}P_{xx}(x,\beta)dx+\sin(\lambda\log(2))2^{-\ep}P_x\left(\frac{1}{2},\beta\right)\right)\\
&+\beta\frac{-\lambda^2+\ep^2}{(\lambda^2+(\ep-1)^2)(\lambda^2+\ep^2)}\left(\int_{0}^\frac{1}{2}\cos\left(\lambda\log(x)\right)x^{-\ep}P_{xx}(x,\beta)dx-\cos(\lambda\log(2))2^{-\ep}P_x\left(\frac{1}{2},\beta\right)\right)
\end{align*}
We will split $\text{Re}B_{11}$ into two terms, denoting
\begin{align*}
&B^R_{111}(\lambda,\ep,\beta)\\&=-\left(1+\frac{\beta}{\lambda^2+(1-\ep)^2}\right)\frac{\ep}{\lambda^2+\ep^2}
\left(\int_{0}^\frac{1}{2}\cos\left(\lambda\log(x)\right)x^{-\ep}P_{xx}(x,\beta)dx-\cos(\lambda\log(2))2^{-\ep}P_x\left(\frac{1}{2},\beta\right)\right)\\
&\equiv -\left(1+\frac{\beta}{\lambda^2+(1-\ep)^2}\right)\frac{\ep}{\lambda^2+\ep^2}b_{111}(\lambda,\ep,\beta),\\
&B^R_{112}(\lambda,\ep,\beta)\\ &=+\left(-1+\frac{(2\ep-1)\beta}{\lambda^2+(\ep-1)^2}\right)\frac{\lambda}{\lambda^2+\ep^2}\left(\int_{0}^\frac{1}{2}
\sin\left(\lambda\log(x)\right)x^{-\ep}P_{xx}(x,\beta)dx+\sin(\lambda\log(2))2^{-\ep}P_x\left(\frac{1}{2},\beta\right)\right)\\
&+\beta\frac{-\lambda^2+\ep^2}{(\lambda^2+(\ep-1)^2)(\lambda^2+\ep^2)}\left(\int_{0}^\frac{1}{2}\cos\left(\lambda\log(x)\right)x^{-\ep}P_{xx}(x,\beta)dx-
\cos(\lambda\log(2))2^{-\ep}P_x\left(\frac{1}{2},\beta\right)\right)
\end{align*}
Next we will pass to the limit in the term inside of $I_1(\ep)$ involving $B^R_{111}(\lambda,\ep,\beta)$
\begin{align*}
&\lim_{\ep\to 0^+}\int_{-\infty}^\infty -\left(1+\frac{\beta}{\lambda^2+(1-\ep)^2}\right)\frac{\ep}{\lambda^2+\ep^2}b^R_{111}(\lambda,\ep,\beta)U(\lambda,0,\beta)d\lambda,\\
&=\lim_{\ep\to 0^+}\int_{-\infty}^\infty -\left(1+\frac{\beta}{\lambda^2+(1-\ep)^2}\right)\frac{\ep}{\lambda^2+\ep^2}b^R_{111}(\lambda,\ep,\beta)d\lambda\times U(0,0,\beta)\\
&+\lim_{\ep\to 0^+}\int_{-\infty}^\infty -\left(1+\frac{\beta}{\lambda^2+(1-\ep)^2}\right)\frac{\ep}{\lambda^2+\ep^2}b^R_{111}(\lambda,\ep,\beta)(U(\lambda,0,\beta)-U(0,0,\beta))d\lambda\\&
=\lim_{\ep\to 0^+}\int_{-\infty}^\infty -\left(1+\frac{\beta}{\ep^2\lambda^2+(1-\ep)^2}\right)\frac{1}{\lambda^2+1}b^R_{111}(\ep\lambda,\ep,\beta)d\lambda\times U(0,0,\beta)\\
&+\lim_{\ep\to 0^+}\int_{-\infty}^\infty -\left(1+\frac{\beta}{\ep^2\lambda^2+(1-\ep)^2}\right)\frac{1}{\lambda^2+1}b_{111}(\ep\lambda,\ep,\beta)(U(\ep\lambda,0,\beta)-U(0,0,\beta))d\lambda.
\end{align*}
By dominated convergence theorem (DCT) we get that
\begin{align*}
=-(1+\beta)\pi b^R_{111}(0,0,\beta)U(0,0,\beta).
\end{align*}
In addition
\begin{align*}
b^R_{111}(0,0,\beta)=\int_{0}^\frac{1}{2}P_{xx}(x,\beta)dx-P_x\left(\frac{1}{2},\beta\right)=-P_x(0,\beta)=-2\beta c_\beta.
\end{align*}
Therefore
\begin{align*}
&\lim_{\ep\to 0^+}\int_{-\infty}^\infty -\left(1+\frac{\beta}{\lambda^2+(1-\ep)^2}\right)\frac{\ep}{\lambda^2+\ep^2}b_{111}(\lambda,\ep,\beta)U(\lambda,0,\beta)d\lambda =2\pi(1+\beta)c_\beta\beta U(0,0,\beta)
\end{align*}
The rest of terms in $\text{Re}\left(B-B^R_{111}\right)(\lambda,\ep,\beta)$ are bounded by a constant uniformly in $\ep$ and $\lambda$ except by the term $B_{12}(\lambda,\ep,\beta)$ which is bounded by $C(1+|\lambda|)^{\beta^+}$ uniformly in $\ep$. In order to check these two facts are we notice that $B_{2}(\lambda,\ep,\beta)$ can be bounded by a constant in a trivial way and that the only factor in $B_{112}(\lambda,\ep,\beta)$ which can give some trouble is
\begin{align}\label{trouble}\frac{\lambda}{\lambda^2+\ep^2}\left(\int_{0}^\frac{1}{2}
\sin\left(\lambda\log(x)\right)x^{-\ep}P_{xx}(x,\beta)dx+\sin(\lambda\log(2))2^{-\ep}P_x\left(\frac{1}{2},\beta\right)\right).\end{align}
However,
\begin{align*}
\eqref{trouble}=\frac{\frac{\lambda}{\ep}}{1+\left(\frac{\lambda}{\ep}\right)^2}\left(-\int_{0}^\frac{1}{2}
\frac{\sin\left(\ep\frac{\lambda}{\ep}\log(x)\right)}{\ep}x^{-\ep}P_{xx}(x,\beta)dx-\frac{\sin(\ep\frac{\lambda}{\ep}\log(2))}{\ep}2^{-\ep}P_x\left(\frac{1}{2},\beta\right)\right).
\end{align*}
an then
\begin{align*}
\left|\eqref{trouble}\right|\leq \frac{\left(\frac{\lambda}{\ep}\right)^2}{1+\left(\frac{\lambda}{\ep}\right)^2}
\left(\int_{0}^\frac{1}{2} |\log(x)|x^{-\ep}P_{xx}(x,\beta)dx + \log(2)P_{x}\left(\frac{1}{2},\beta\right)\right)\leq C.
\end{align*}
For $B_2(\lambda,\ep,\beta)$ we have that
\begin{align*}
B_2(\lambda,\ep,\beta)=&(i\lambda-\ep+1+\beta)\int_{\frac{1}{2}}^\infty x^{-i\lambda+\ep-2}P(x,\beta)dx\\
&(i\lambda-\ep+1+\beta)\int_{\frac{1}{2}}^\frac{3}{2} x^{-i\lambda+\ep-2}P(x,\beta)dx+(i\lambda-\ep+1+\beta)\int_{\frac{3}{2}}^\infty x^{-i\lambda+\ep-2}P(x,\beta)\\
&\equiv B_{21}(\lambda,\ep,\beta)+B_{22}(\lambda,\ep,\beta).
\end{align*}
In $B_{22}(\lambda,\ep,\beta)$ we can integrate by parts to get
\begin{align*}
B_{22}(\lambda,\ep,\beta)=\frac{i\lambda-\ep+\beta+1}{-i\lambda+\ep-1}\int_{\frac{3}{2}}^\infty x^{-i\lambda+\ep-1}P_x(x,\beta)dx
-\frac{i\lambda-\ep+\beta+1}{-i\lambda+\ep-1}\left(\frac{3}{2}\right)^{-i\lambda+\ep-1}P\left(\frac{3}{2},\beta\right).
\end{align*}
from where we see that $|B_{22}(\lambda,\ep,\beta)|\leq C$ uniformly in $\ep$.

For $B_{21}(\lambda,\ep,\beta)$ we have that
\begin{align*}
B_{21}(\lambda,\ep,\beta)=&(i\lambda-\ep+1+\beta)\int_{\frac{1}{2}}^\frac{3}{2}x^{-i\lambda+\ep-2}|x-1|^{-\beta}dx\\
&- (i\lambda-\ep+1+\beta)\int_{\frac{1}{2}}^\frac{3}{2}x^{-i\lambda+\ep-2}(1+x)^{-\beta}dx\\
&\equiv B_{211}(\lambda,\ep,\beta)+B_{212}(\lambda,\ep,\beta).
\end{align*}
$B_{212}(\lambda,\ep,\beta)$ can be bounded in the same way than $B_{22}(\lambda,\ep,\beta)$ above.

With $B_{211}(\lambda,\ep,\beta)$ we proceed as follows. For $|\lambda|<1$ it is clear that $|B_{211}(\lambda,\ep,\beta)|<C$ uniformly in $\ep$. For $|\lambda|>1$
\begin{align*}
B_{211}(\lambda,\ep,\beta)=&\frac{(i\lambda-\ep+1+\beta)}{-i\lambda}\int_{\frac{1}{2}}^\frac{3}{2}\pa_x\left(x^{-i\lambda}-1\right)x^{\ep-2}|x-1|^{-\beta}dx\\
=&\frac{(i\lambda-\ep+1+\beta)}{-i\lambda}\int_{\frac{1}{2}}^\frac{3}{2}\left(x^{-i\lambda}-1\right)x^{\ep-2}\frac{\beta}{(x-1)|x-1|^\beta}dx\\
&+ \frac{(i\lambda-\ep+1+\beta)}{-i\lambda}\int_{\frac{1}{2}}^\frac{3}{2}\left(x^{-i\lambda}-1\right)(2-\ep)x^{\ep-3}|x-1|^{-\beta}dx\\
&+ \text{boundary terms at $x=\frac{1}{2}$ and $x=\frac{3}{2}$}.
\end{align*}
The boundary at $x=\frac{1}{2}$ and $x=\frac{3}{2}$ and the second term in the last equality of the last expression are bounded by a constant uniformly in $\ep$. Finally
\begin{align*}
\left|\int_{\frac{1}{2}}^\frac{3}{2}\left(x^{-i\lambda}-1\right)x^{\ep-2}\frac{\beta}{(x-1)|x-1|^\beta}dx\right|
\leq |\lambda|^{\beta^+}\int_{\frac{1}{2}}^\frac{3}{2}\frac{\left|x^{-i\lambda}-1\right|}{|\lambda|^{\beta^+}|x-1|^{\beta^+}}
x^{\ep-2}\frac{\beta}{|x-1|^{1+\beta-\beta^+}}dx
\end{align*}
where
$$\frac{\left|x^{-i\lambda}-1\right|}{|\lambda|^{\beta^+}|x-1|^{\beta^+}}\leq C$$
uniformly in $\lambda$ and $x\in [\frac{1}{2},\frac{3}{2}]$ for $\beta^+\leq 1$.
In addition
\begin{align*}
U(\lambda,0,\beta)&=\left|\int_{0}^\infty x^{i\lambda-\beta-2}u(x)dx\right|^2=
\frac{1}{\lambda^2+(1+\beta)^2}\left|\int_{0}^\infty x^{i\lambda-1-\beta}u_{x}(x)dx\right|^2\\
&\leq k\left(\beta,\beta^+||u||_{C^1},\text{support}\, u\right)\frac{1}{\lambda^2+(1+\beta)^2}
\end{align*}
Therefore, DCT applies to get
\begin{align*}
&\lim_{\ep\to 0^+}\int_{-\infty}^\infty\text{Re}\left(B-B_{111}\right)(\lambda,\ep,\beta)U(\lambda,0,\beta)d\lambda=\int_{-\infty}^\infty\lim_{\ep\to 0^+}\text{Re}\left(B-B_{111}\right)(\lambda,\ep,\beta)U(\lambda,0,\beta)d\lambda\\
&= \int_{-\infty}^\infty B_0(\lambda,\beta)U(\lambda,0,\beta)d\lambda,
\end{align*}
 where the real function $B_0(\lambda,\beta)$ is given by
\begin{align*}
B_0(\lambda,\beta)=&-\left(1+\frac{\beta}{1+\lambda^2}\right)\left(\int_{0}^\frac{1}{2}\frac{\sin(\lambda\log(x))}{\lambda}P_{xx}(x,\beta)dx+
\frac{\sin(\lambda\log(2))}{\lambda}P_x\left(\frac{1}{2},\beta\right)\right)\\
&-\beta\frac{1}{(\lambda^2+1)}\left(\int_{0}^\frac{1}{2}\cos\left(\lambda\log(x)\right)P_{xx}(x,\beta)dx-\cos(\lambda\log(2))
P_x\left(\frac{1}{2},\beta\right)\right)
\\
&+\text{Re}\left(\frac{i\lambda+1+\beta}{-i\lambda-1}2^{i\lambda+1}\right)P\left(\frac{1}{2},\beta\right)+\text{Re} \left((i\lambda+1+\beta)\int_{\frac{1}{2}}^\infty x^{i\lambda-2}P(x,\beta)dx\right).
\end{align*}

Next we prove that $\lim_{\ep\to 0^+} I_2(\ep)=0$. Since $\text{Re}(U(\lambda,\ep,\beta))$ is an even function on $\lambda$, proceeding as before we find that
\begin{align*}
\lim_{\ep\to 0^+}\int_{-\infty}^\infty B(\lambda,\ep,\beta) \text{Re}\big(U(\lambda,\ep,\beta)-U(\lambda,0,\beta)\big)d\lambda =0,
\end{align*}
then we just have to check
\begin{align*}
\lim_{\ep\to 0^+}\int_{-\infty}^\infty i B(\lambda,\ep,\beta)\text{Im}\left(U(\lambda,\ep,\beta)\right)d\lambda =0.
\end{align*}
Since $\text{Im} (U(\lambda,\ep,\beta))$ is an odd function in $\lambda$ and $\text{Re}(B(\lambda,\ep,\beta))$ is even we just need to  prove that
\begin{align*}
\lim_{\ep\to 0^+}\int_{-\infty}^\infty \text{Im}(B(\lambda,\ep,\beta))\text{Im}(U(\lambda,\ep,\beta))d\lambda =0.
\end{align*}
We will use that
\begin{align*}
\int_{0}^\infty x^{-i\lambda+\ep-2-\beta}u(x)dx=\frac{1}{i\lambda-\ep+1+\beta}\int_{0}^\infty x^{-i\lambda+\ep-1-\beta}u_x(x)dx.
\end{align*}
thus
\begin{align*}
&U(\lambda,\ep,\beta)=\frac{1}{i\lambda-\ep+1+\beta}\int_{0}^\infty x^{-i\lambda+\ep-1-\beta}u_x(x)dx\frac{1}{-i\lambda+\ep+1+\beta}\int_{0}^\infty x^{+i\lambda-\ep-1-\beta}u_x(x)dx\\
=&\frac{(-i\lambda-\ep+1+\beta)(i\lambda+\ep+1+\beta)}{(\lambda^2+(1+\beta-\ep)^2)(\lambda^2+(1+\beta+\ep)^2)}
\int_{0}^\infty \int_{0}^\infty\left(\frac{x}{y}\right)^{i\lambda-\ep}(xy)^{-1-\beta}u_x(x)u_x(y)dxdy\\
=&\frac{\lambda^2+(1+\beta)^2-\ep^2-2 i \ep\lambda}{(\lambda^2+(1+\beta-\ep)^2)(\lambda^2+(1+\beta+\ep)^2)}
\\ & \times\int_{0}^\infty \int_{0}^\infty\left(\cos\left(\lambda\log\left(\frac{x}{y}\right)\right)
+i\sin\left(\lambda\log\left(\frac{x}{y}\right)\right)\right)\left(\frac{x}{y}\right)^\ep(xy)^{-1-\beta}u_x(x)u_x(y)dxdy.
\end{align*}
Therefore
\begin{align}\label{IMU}
&\text{Im}(U(\lambda,\ep,\beta))\\
&=\frac{\lambda^2+(1+\beta)^2-\ep^2}{(\lambda^2+(1+\beta-\ep)^2)(\lambda^2+(1+\beta+\ep)^2)}
\int_{0}^\infty \int_{0}^\infty\left(\sin\left(\lambda\log\left(\frac{x}{y}\right)\right)\right)\left(\frac{x}{y}\right)^\ep(xy)^{-1-\beta}u_x(x)u_x(y)dxdy\nonumber\\
&-\frac{2 \ep\lambda}{(\lambda^2+(1+\beta-\ep)^2)(\lambda^2+(1+\beta+\ep)^2)}
\int_{0}^\infty \int_{0}^\infty\left(\cos\left(\lambda\log\left(\frac{x}{y}\right)\right)\right)\left(\frac{x}{y}\right)^\ep(xy)^{-1-\beta}u_x(x)u_x(y)dxdy\nonumber.
\end{align}
By making the same splitting $B=B_1+B_2$, $B_1=B_{11}+B_{12}$, $B_{11}=B_{111}+B_{112}$, we see that, by applying DCT,
\begin{align*}
&\lim_{\ep\to 0^+}\int_{-\infty}^\infty\text{Im}(B(\lambda,\ep,\beta)-B_{11}(\lambda,\ep,\beta))\text{Im}( U(\lambda,\ep,\lambda))d\lambda
\\&=\int_{-\infty}^\infty\lim_{\ep\to 0^+}\text{Im}(B(\lambda,\ep,\beta)-B_{111}(\lambda,\ep,\beta))\text{Im}( U(\lambda,\ep,\lambda))d\lambda =0,
\end{align*}
since $\lim_{\ep\to 0^+} \text{Im}(U(\lambda,\ep,\beta))=0$. And now the $\text{Im}(B_{11}(\lambda,\ep,\beta)$ contains harmless terms but those ones which contain  either the factor
\begin{align*}
\frac{\ep}{\lambda^2+\ep^2}\sin(\lambda\log(x) \qquad \text{or the factor} \qquad \frac{\lambda}{\lambda^2+\ep^2}\cos(\lambda\log(x).
\end{align*}
It easy to see that the term which contains the factor $\frac{\ep}{\lambda^2+\ep^2}\sin(\lambda\log(x)$ gives 0 in the limit $\ep\to 0^+$ because the $\sin(\lambda\log(x))$ in it. However to deal with the factor $\frac{\lambda}{\ep^2+\lambda^2}$ we need either the factor $\sin(\lambda\log(x/y))$ in the first term of \eqref{IMU} or the factor $\lambda$ in the second term of \eqref{IMU}. In any case we can show that DCT can be applied in order to get $\lim_{\ep\to 0^+} I_2(\ep)=0$.

\vspace{0.5cm}

Finally we will prove that $B_0(\lambda,\ep)\geq 0$. We notice that, for fixed $\lambda>0$,
\begin{align*}
B_0(\lambda,\beta)=\lim_{\ep\to 0^{+}}\text{Re}\left(c_\beta(i\lambda-\ep+1+\beta)\int_{0
}^\infty x^{-i\lambda+\ep-2}\left(\frac{1}{|x-1|^\beta}-\frac{1}{|1+x|^\beta}\right)dx\right).
\end{align*}
Thus  to show that $\text{Re}(B_0)(\lambda,\beta)\geq 0$ is enough to show that $\text{Re}(B)(\lambda,\ep,\beta)>0$  $\forall \ep>0$ and $\forall \lambda\neq 0$. In order to do it we first write $\overline{m}(\lambda,\ep,\beta)$ in the following way
\begin{align*}
\overline{m}(\lambda,\ep,\beta)=&c_{\beta}\int_{0}^\infty x^{i\lambda-2+\ep}\left(|x-1|^{-\beta}-|x+1|^{-\beta}\right)dx\\
&=c_{\beta}\int_{0}^1 x^{-i\lambda-2+\ep}\left(|x-1|^{-\beta}-|x+1|^{-\beta}\right)dx+c_{\beta}\int_{1}^\infty x^{-i\lambda-2+\ep}\left(|x-1|^{-\beta}-|x+1|^{-\beta}\right)dx\\
&=c_{\beta}\int_{0}^1 \left(x^{-i\lambda-2+\ep}+x^{i\lambda+\beta-\ep}\right)\left(|x-1|^{-\beta}-|x+1|^{-\beta}\right)dx\\
&=c_{\beta}\int_{0}^1 \cos(\lambda\log(x))\left(x^{-2+\ep}+x^{\beta-\ep}\right)\left(|x-1|^{-\beta}-|x+1|^{-\beta}\right)dx\\
&-c_{\beta}\int_{0}^1 \sin(\lambda\log(x))\left(x^{-2+\ep}-x^{\beta-\ep}\right)\left(|x-1|^{-\beta}-|x+1|^{-\beta}\right)dx
\end{align*}
where we did the change of variables $x'=1/x$. And then
\begin{align*}
&\text{Re}\p{(i\lambda+1+\beta-\ep)\overline{m}(\lambda, \ep,\beta)}=c_\beta \int_{0}^1 \cos(\lambda\log(x))(1+\beta-\ep)\left(x^{-2+\ep}+x^{\beta-\ep}\right)\left(|x-1|^{-\beta}-|x+1|^{-\beta}\right)dx\\
&+c_{\beta}\int_{0}^1 \lambda \sin(\lambda\log(x))\left(x^{-2+\ep}-x^{\beta-\ep}\right)\left(|x-1|^{-\beta}-|x+1|^{-\beta}\right)dx.
\end{align*}
Integrating by parts in the second integral in the previous expression we have that
\begin{align*}
\text{Re}\p{(i\lambda+1+\beta-\ep)\overline{m}(\lambda, \ep,\beta)}=c_\beta\int_{0}^1\cos(\lambda\log(x))F(x,\ep,\beta)dx
\end{align*}
with
\begin{align*}
F(x,\ep,\beta)=&(1+\beta-\ep)\left(x^{-2+\ep}+x^{\beta-\ep}\right)\left(|x-1|^{-\beta}-|x+1|^{-\beta}\right)\\&
+\pa_x\p{\left(x^{-1+\ep}-x^{1+\beta-\ep}\right)\left(|x-1|^{-\beta}-|x+1|^{-\beta}\right)}
\end{align*}

Integrating again we have that
\begin{align*}
\text{Re}\p{(i\lambda+1+\beta-\ep)\overline{m}(\lambda, \ep,\beta)}=&-\frac{c_\beta}{\lambda}\int_{0}^1\sin(\lambda\log(x))\pa_x(xF(x,\ep,\beta))dx.
\end{align*}
Let us call $G(x,\ep,\beta)=\pa_x\p{xF(x,\ep,\beta)}$. In order to be able to pass to the limit in $\ep$ we split
\begin{align*}
\int_{0}^1\sin(\lambda\log(x))G(x,\ep,\beta)dx=\int_{0}^\delta \sin(\lambda\log(x))G(x,\ep,\beta)dx+\int_{\delta}^1\sin(\lambda\log(x))G(x,\ep,\beta)dx.
\end{align*}
It is straightforward to pass to the limit in the second term of the previous equation. In the second term we will integrate by part. Since $G(x,\ep,b)\sim c_1 x^{-1+\ep}+c_2 x^{1+\ep}$ for $x\sim 0$ we have that
\begin{align*}
&\text{Im}\int_{0}^\delta x^{i\lambda}G(x,\ep,\beta)dx= \text{Im} \p{\frac{1}{i\lambda+\ep}\int_0^\delta \pa_x \p{x^{i\lambda+\ep}} x^{1-\ep}G(x,\ep,\beta)dx}\\
& =\text{Im}\p{ \frac{\delta^{i\lambda}}{i\lambda+\ep}}\delta  G(\delta,\beta,\ep)-\text{Im}\p{\int_{0}^\delta x^{i\lambda}x^{\ep}\pa_x\p{x^{1-\ep}G(x,\ep,\beta)}dx}
\end{align*}
and we pass to limit, $\ep\to 0$. After that we can pass to the limit $\delta\to 0$, and since the pointwise limit $G_0(x,\beta)=\lim_{\ep\to 0}G(x,\ep,\beta)$ (for $x>0$) satisfies $G(x,\beta)=O(x)$ for $x\sim 0$ we have that
\begin{align*}
B_0(\lambda,\beta)=-\frac{c_\beta}{\lambda}\int_{0}^1\sin\p{(\lambda\log(x))}G_0(x,\beta)dx.
\end{align*}
In addition and integration by parts yields,
\begin{align*}
B_0(\lambda,\beta)=\frac{c_\beta}{\lambda^2}\int_{0}^1(1-\cos(\lambda\log(x)))\pa_x(xG_0(x,\beta))dx
\end{align*}

Then, in order to prove that $B_0(\lambda,\beta)\geq$ we have to show that $G_0(x,\beta)+x\pa_x G_0(x,\beta)\geq 0$. Since $G(0,\beta)=0$  it is enough to prove that $\pa_x G_0(x,\beta)\geq 0$. Direct computations yields
\begin{align*}
\pa_x G_0(x,\beta)=\frac{\beta}{x^3}\p{f(x,\beta)-f(-x,\beta)},
\end{align*}
where
$$f(x,\beta)=(1-x)^{-3-\beta}\p{2+(3+\beta)x(-2+(2+\beta)x)-(1+\beta)(2+\beta)x|x|^{2+\beta}}.$$
In order to check that $\pa_x G_0(x,\beta)\geq 0$ we can write
\begin{align*}
f(x,\beta)-f(-x,\beta)=\int_{-1}^1\frac{d}{ds}f(sx,\beta)ds
\end{align*}
and check that
\begin{align*}
\pa_sf(sx,\beta)=(1+\beta)(2+\beta)(3+\beta)s^2x^3(1-sx)^{-4-\beta}\p{1-|s|^\beta x^\beta}\geq 0
\end{align*}
for $x>0$, $-1<s<1$.

\end{proof}

By applying corollary \ref{mainlemma2} we have that
\begin{align*}
\frac{d}{dt}\int_{0}^\infty \frac{u(x)}{x^{2+\beta}}dx\geq (2+\beta)(1+\beta)c_\beta\beta\left(\int_{0}^\infty \frac{u(x)}{x^{2+\beta}}dx\right)^2.
\end{align*}
Therefore $\int_{0}^\infty \frac{u(x)}{x^{2+\beta}}dx$ must blows up in finite time. However $\int_{0}^\infty \frac{u(x)}{x^{2+\beta}}dx\leq k_{\beta,\,\beta^+}||u||_{C^{1+\beta^+}}.$

\section{The case $0<\alpha<1$ in the periodic setting}\label{periodic}
We  take $u_0\in C^{1}(\R)$ and
 we  assume that there exist a solution $u(x,t)\in C([0,\infty); C^{1}(\R))\cap C^1([0,\infty); C(\R))$. It can be checked that, then, $u(x,t)$ satisfies H1, H2 and H3 for all $t\in [0,\infty)$. Also, and very important in this case, we will take $u_0$ satisfying H4. As it was proven in \cite{C} the solution also inherits this property, i.e., $$\pa_x u(x,t)\geq 0, \qquad  \text{for $x\in[0,\pi]$ and $t\geq 0$}.$$

In order to prove the existence of singularities we will use a similar strategy to one that in section \ref{line}. Let us first recall the following facts of the operator $\Lambda^{\alpha-1}$ in the periodic setting.

For a $2\pi-$periodic function $u(x)$ such that $\int_{\T}u(x)dx=0$, the operator $\Lambda^{\alpha-1}$ with $0<\alpha<1$ is defined through of the Fourier transform
\begin{align}\label{defifourier}
\widehat{\Lambda^{\alpha-1}u}(n)=|n|^{\alpha-1}\hat{u}(n)\quad 0< n\in \mathbb{N}.
\end{align}

 The operator $\Lambda^{\alpha-1}$ defined as \eqref{defifourier} admits the representation
\begin{align*}
\Lambda^{\alpha-1}u(x)=c_{\alpha}\int_{\R}\frac{u(y)}{|x-y|^{\alpha}}dy
\end{align*}
with $c_\alpha$ as in \eqref{Lambdadefi} and, for an even function $u$, $\Lambda^{\alpha-1}Hu$ can be written as
\begin{align*}
\overline{c}_\alpha\int_{0}^\infty u(y)\p{\frac{\sign(x-y)}{|x-y|^\alpha}+\frac{1}{|x-y|^\alpha}}dy, && \overline{c}_\alpha=\frac{\Gamma(\alpha)\sin\p{\frac{\pi\alpha}{2}}}{\pi}.
\end{align*}

We will look at the time evolution of $\int_{0}^\infty\frac{u(x)}{x^{1+\alpha}}dx$ which is given by
\begin{align*}
\pa_t \int_{0}^\infty\frac{u(x)}{x^{1+\alpha}}dx=&-(1+\alpha)\int_{0}^\infty\frac{u(x)\Lambda^{\alpha-1}Hu(x)}{x^{2+\alpha}}\\
& -(1+\alpha)\int_{0}^\infty x^{-\alpha-\ep} u(x)x^{-1}\Lambda^{\alpha-1+\ep}Hu(x)\frac{dx}{x}.
\end{align*}
Instead of $u(x)$ and $\Lambda^{\alpha-1}Hu$ do not decay, the functions $x^{-\alpha+\ep}u(x)$ and $x^{-1+\ep}\Lambda^{\alpha-1}u(x)$ have enough decay at  infinity (for $\ep$ small enough) to apply the Parseval Identity of the Mellin transform to get
\begin{align*}
\pa_t \int_{0}^\infty\frac{u(x)}{x^{1+\alpha}}dx=-\frac{1+\alpha}{2\pi}\int_{\R} \overline{M[x^{-\alpha-\ep}u(x)]}(\lambda)M[x^{-1+\ep}\Lambda^{\alpha-1}Hu(x)](\lambda)d\lambda,
\end{align*}
where
\begin{align*}
&M[x^{-\alpha-\ep}u(x)](\lambda)=\int_{0}^\infty x^{i\lambda-\alpha-1-\ep}u(x)dx\\
&M[x^{-1+\ep}\Lambda^{\alpha-1}Hu(x)](\lambda)=\int_{0}^\infty x^{i\lambda-2+\ep}\Lambda^{\alpha-1}Hu(x)dx
\end{align*}
Since $Hu(x)$ is an odd function of $x$, we can write
\begin{align*}
&M[x^{-1+\ep}\Lambda^{\alpha-1}Hu(x)](\lambda)=\overline{c}_\alpha\int_{0}^\infty u(y) \int_{0}^\infty x^{i\lambda-2+\ep}\p{\frac{\sign(x-y)}{|x-y|^\alpha}+\frac{1}{|x+y|^\alpha}} dx dy\\
&=\overline{c}_\alpha\int_{0}^\infty y^{i\lambda-\alpha+\ep-1} u(y) \int_{0}^\infty x^{i\lambda-2+\ep}\p{\frac{\sign(x-1)}{|x-1|^\alpha}+\frac{1}{|x+1|^\alpha}} dx dy\\
&=m_p(\lambda,\ep,\alpha)M[x^{-\alpha+\ep}u(x)](\lambda).
\end{align*}

Therefore we finally obtain that
\begin{align}\label{dos}
&-(1+\alpha)\int_{0}^\infty \frac{u(x)\Lambda^{\alpha-1} Hu(x) }{x^{2+\alpha}}dx\nonumber\\
&=-\frac{1+\alpha}{2\pi}\int_{-\infty}^\infty \overline{m}_p(\lambda,\ep,\alpha)
\overline{M[x^{\ep-\alpha}u]}(\lambda)M[x^{-\ep-\alpha}u](\lambda)d\lambda
\end{align}
We will denote
\begin{align*}
A(\lambda,\ep,\alpha)\equiv -\overline{m_p}(\lambda,\ep,\alpha)
\end{align*}
and
\begin{align*}
U(\lambda,\ep,\alpha)=\overline{M[x^{\ep-\alpha}u]}(\lambda)M[x^{-\ep-\alpha}u](\lambda).
\end{align*}

In order to prove the a lemma analogous to lemma \ref{mainlemma} in section \ref{line} we will need some preliminary results

\begin{lemma}\label{decaytotal} Let $|\lambda|>2$, $0<\alpha<1$. Then
\begin{enumerate}
\item $|\text{Im}\p{A(\lambda,\ep,\alpha)}|\leq C |\lambda|^{-1+\alpha}$.
\item $|\text{Re}\p{A(\lambda,\ep,\alpha)}|\leq C |\lambda|^{-2+\alpha}$.
\end{enumerate}
In both cases the constant $C$ does not depend either $\alpha$ and $\ep$.
\end{lemma}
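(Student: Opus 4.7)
The plan is to view $A(\lambda,\ep,\alpha)=-\overline{m_p}(\lambda,\ep,\alpha)$ as the Fourier transform of a function with an integrable algebraic singularity at $t=0$, and to extract the leading asymptotic in $|\lambda|$ by peeling off a few fractional Taylor terms.

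First, substitute $z=e^t$ to obtain
$$
\overline{m_p}(\lambda,\ep,\alpha)=\overline c_\alpha\int_{\R} e^{-i\lambda t}\Phi(t)\,dt,\qquad \Phi(t)=e^{(-1+\ep)t}\bigl[\sign(e^t-1)|e^t-1|^{-\alpha}+(e^t+1)^{-\alpha}\bigr].
$$
Writing $e^t-1=t\,r(t)$ with $r$ real-analytic and $r(0)=1$, near $t=0$ one has
$$
\Phi(t)=\sign(t)|t|^{-\alpha}Q(t)+S(t),\qquad Q(t):=r(t)^{-\alpha}e^{(-1+\ep)t},\ Q(0)=1,
$$
with $S\in C^\infty(\R)$; globally $\Phi\in L^1(\R)$ since $\Phi(t)\sim -2\alpha\,e^{\ep t}$ at $-\infty$ (requires $\ep>0$) and decays like $e^{(-1+\ep-\alpha)t}$ at $+\infty$.

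Next, pick a smooth cutoff $\chi\in C_c^\infty(\R)$ with $\chi\equiv 1$ near $0$ and Taylor expand $Q(t)=1+q_1 t+O(t^2)$. Using the identity $t\sign(t)|t|^{-\alpha}=|t|^{1-\alpha}$, split
$$
\Phi(t)=\chi(t)\sign(t)|t|^{-\alpha}+q_1\,\chi(t)|t|^{1-\alpha}+\Phi_\sharp(t),
$$
where $\Phi_\sharp$ is continuous with $O(|t|^{2-\alpha})$ behaviour near $t=0$ and exponential decay at $\pm\infty$. The classical tempered-distribution identities
$$
\widehat{\sign(t)|t|^{-\alpha}}(\lambda)=-2i\,\Gamma(1-\alpha)\cos\!\tfrac{\pi\alpha}{2}\,\sign(\lambda)|\lambda|^{\alpha-1},\quad \widehat{|t|^{1-\alpha}}(\lambda)=-2\,\Gamma(2-\alpha)\cos\!\tfrac{\pi\alpha}{2}\,|\lambda|^{\alpha-2}
$$
are purely imaginary and purely real respectively; multiplying by $\chi$ corresponds in Fourier to convolution with $\widehat\chi\in\mathcal S$, and a one-term Taylor expansion of $\widehat\chi$ at $\lambda$ isolates the leading behaviour and leaves an error one factor $|\lambda|^{-1}$ smaller. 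Thus the singular pieces contribute a purely imaginary term of size $|\lambda|^{\alpha-1}$ and a purely real term of size $|\lambda|^{\alpha-2}$. The error $\widehat{\Phi_\sharp}(\lambda)$ is estimated by iterating the peeling one more step (extracting $\sign(t)|t|^{2-\alpha}$ and $|t|^{3-\alpha}$ terms) and then applying two integrations by parts to a $C^2$ remainder: this gives $O(|\lambda|^{-2})\le O(|\lambda|^{\alpha-2})$ for $|\lambda|>2$. Multiplication by $\overline c_\alpha$ and the sign in $A=-\overline{m_p}$ yield the two claims.

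The key technical point is the uniformity of $C$ in $\alpha\in(0,1)$ and $\ep$. The $\ep$-dependence enters only through a smooth perturbation of $Q$ and is harmless for $\ep$ small. For $\alpha$, although the individual factors $\Gamma(1-\alpha)$, $\cos(\pi\alpha/2)$ and $\overline c_\alpha=\Gamma(\alpha)\sin(\pi\alpha/2)/\pi$ can degenerate at $\alpha=0$ or $\alpha=1$, the reflection formula $\Gamma(\alpha)\Gamma(1-\alpha)\sin(\pi\alpha)=\pi$ collapses the combinations to $\overline c_\alpha\,\Gamma(1-\alpha)\cos(\pi\alpha/2)=\tfrac12$ and $\overline c_\alpha\,\Gamma(2-\alpha)\cos(\pi\alpha/2)=\tfrac12(1-\alpha)$, so the leading-order prefactors of $A$ become bounded functions of $\alpha\in(0,1)$. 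The remainder constants involve only $\|Q^{(k)}\|_\infty$, $\|S^{(k)}\|_{L^1}$-type quantities and the length of $\text{supp}\,\chi$, all uniformly bounded.
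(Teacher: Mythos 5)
Your argument is correct, and it takes a genuinely different route from the paper. The paper stays in the Mellin variable: it splits the integral at $x=1\pm\delta$ with $\delta=|\lambda|^{-1}$, integrates by parts repeatedly in each region, and for the real part gains the extra factor $|\lambda|^{-1+\alpha}$ through one more integration by parts together with a cancellation of the boundary/singular contributions on the two sides of $x=1$ (the $J_{21}+J_{23}$ step, using $|\cos(\lambda\log(1+\delta))-\cos(\lambda\log(1-\delta))|\leq C\lambda^2\delta^3$). You instead set $x=e^t$, turning $\overline{m_p}$ into a Fourier transform of a function with a single algebraic singularity $\sign(t)|t|^{-\alpha}$ at $t=0$, peel off the model homogeneous terms whose transforms are explicit, and read off the two rates from parity: the odd singular piece is purely imaginary of size $|\lambda|^{\alpha-1}$, the even piece purely real of size $|\lambda|^{\alpha-2}$, and the smooth remainder gives $O(|\lambda|^{-2})$ after two integrations by parts. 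Your route is arguably cleaner and yields the actual leading asymptotics (with constants, uniform in $\alpha$ via the reflection-formula collapse you note), whereas the paper's argument is more elementary and self-contained, never invoking distributional Fourier transform identities.

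Three points to tighten. First, take $\chi$ even, so that $\chi(t)\sign(t)|t|^{-\alpha}$ is odd and $\chi(t)|t|^{1-\alpha}$ is even and the imaginary/real separation is exact rather than only to leading order (with a general $\chi$ the convolution argument still works, but needs the one-term Taylor step you describe plus $\int\mathrm{Im}\,\widehat\chi=0$). Second, your global splitting $\Phi=\sign(t)|t|^{-\alpha}Q+S$ has both pieces growing like $e^{(1-\ep)|t|}$ as $t\to-\infty$; only their sum is controlled, so ``$\|S^{(k)}\|_{L^1}$-type'' bookkeeping should be replaced by tail estimates on $\Phi$ itself (which your compactly supported peeling in fact leaves untouched there). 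The $\ep$-uniformity of $\|\pa_t^2\Phi_\sharp\|_{L^1}$ on $t\to-\infty$ is then exactly the cancellation you already used for $L^1$: writing $\Phi(t)=e^{\ep t}g(e^t)$ with $g$ smooth near $0$, one gets $\Phi''=O(\ep^2e^{\ep t})+O(e^{(1+\ep)t})$, whose $L^1$ norm is $O(\ep)+O(1)$, uniformly in $\ep$ and $\alpha$; this short verification should be included. Third, $\widehat{|t|^{1-\alpha}}$ is a finite-part distribution (the exponent $\alpha-2$ lies in $(-2,-1)$), so it is cleaner either to quote the classical asymptotic expansion for Fourier integrals with algebraic singularities applied directly to $\chi(t)|t|^{1-\alpha}$, or to write $\chi=1-(1-\chi)$ and kill $(1-\chi)$ times the model terms by non-stationary phase for $|\lambda|>2$. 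With these adjustments the proof is complete and the constants are uniform in $\alpha\in(0,1)$ and small $\ep>0$, as required.
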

\begin{proof}
Let $0<\delta<\frac{1}{2}$. At the end of the proof we will take $\delta=\lambda^{-1}$. Let us call $p(x,\alpha)=\frac{\sign(x-1)}{|x-1|^\alpha}+\frac{1}{|x-1|^\alpha}$.
To prove 1 we split
\begin{align}\label{I}
&I=\text{Im}\p{\frac{A(\lambda,\ep,\alpha)}{-\overline{c}_\alpha}}=\int_{0}^\infty \sin(\lambda \log(x))x^{-2+\ep}p(x,\alpha)dx\eqref{nonumber}\\
&=\int_{0}^{1-\delta}... dx + \int_{1-\delta}^{1+\delta}...dx+\int_{1+\delta}^\infty...dx\equiv I_{1}+I_2+I_3.\nonumber
\end{align}

Integrating by parts we have that
\begin{align*}
&I_1=-\frac{1}{\lambda}\int_{0}^{1-\delta} \pa_x \p{\cos(\lambda\log(x))} x^{-1+\ep}p(x,\alpha)dx=-\frac{1}{\lambda}\cos(\lambda\log(1-\delta))(1-\delta)^{-1+\ep}p((1-\delta),\alpha)\\
&+\frac{-1+\ep}{\lambda}\int_{0}^{1-\delta}\cos(\lambda\log(x))x^{-2+\ep}p(x,\alpha)dx+\frac{1}{\lambda}\int_{0}^{1-\delta}\cos(\lambda\log(x))x^{-1+\ep}\pa_x p(x,\alpha)dx\\
&\equiv I_{11}+I_{12}+I_{13}.
\end{align*}
\begin{align*}
&I_{12}=\frac{(-1+\ep)}{\lambda}\sin(\lambda\log(1-\delta))(1-\delta)^{-1+\ep}p((1-\delta),\alpha)
-\frac{(-1+\ep)^2}{\lambda^2}\int_{0}^{1-\delta} \sin(\lambda\log(x))x^{-2+\ep}p(x,\alpha)dx\\
&-\frac{(-1+\ep)}{\lambda^2}\int_{0}^{1-\delta}\sin(\lambda\log(x))x^{-1+\ep}\pa_x p (x,\alpha)dx= I_{121}-\frac{(-1+\ep)^2}{\lambda^2}I_{1}+I_{122}.
\end{align*}
Thus
\begin{align*}
\p{1+\frac{(-1+\ep)^2}{\lambda^2}}I_1=I_{11}+I_{13}+I_{121}+I_{122}.
\end{align*}
We now manipulate $I_{13}$ and $I_{122}$.
\begin{align*}
&I_{13}=\frac{1}{\lambda}\int_{0}^{1-\delta}\cos(\lambda\log(x))x^{-1+\ep}\pa_x p(x,\alpha)dx =\frac{1}{\lambda^2}\sin(\lambda\log(1-\delta))(1-\delta)^\ep\pa_xp((1-\delta),\alpha)\\
&-\frac{\ep}{\lambda^2}\int_{0}^{1-\delta}\sin(\lambda\log(x))x^{-1+\ep}\pa_x p(x,\alpha)dx-\frac{1}{\lambda^2}\int_{0}^{1-\delta}\sin(\lambda\log(x))x^\ep \pa^2_xp(x,\alpha)dx\\&=I_{131}+I_{132}+I_{133}.
\end{align*}
We integrate by parts in $I_{132}$ in such a way that
\begin{align*}
&I_{132}=-\frac{\ep}{\lambda^2}\int_{0}^{1-\delta}\sin(\lambda\log(x))x^{-1+\ep}\pa_x p(x,\alpha)dx
=\frac{\ep}{\lambda^3}\cos(\lambda\log(1-\delta))(1-\delta)^{\ep}\pa_x p((1-\delta),\alpha)dx\\
&+\frac{\ep^2}{\lambda^3}\int_{0}^{1-\delta}\cos(\lambda\log(x))x^{-1+\ep}\pa_xp(x,\alpha)dx
+\frac{\ep}{\lambda^3}\int_{0}^{1-\delta}\cos(\lambda\log(x))x^{\ep}\pa^2_xp(x,\alpha)dx\\
&=I_{1321}+\frac{\ep^2}{\lambda^2}I_{13}+I_{1322}
\end{align*}
Then
\begin{align*}
\p{1-\frac{\ep^2}{\lambda^2}}I_{13}=I_{131}+I_{1321}+I_{1322}+I_{133}.
\end{align*}
For $I_{122}$ we have that
\begin{align*}
&I_{122}=-\frac{(-1+\ep)}{\lambda^2}\int_{0}^{1-\delta}\sin(\lambda\log(x))x^{-1+\ep}\pa_x p (x,\alpha)dx
=\frac{(-1+\ep)}{\lambda^3}\cos(\lambda\log(1-\delta))(1-\delta)^\ep\pa_x p(1-\delta,\alpha)\\
&-\frac{(-1+\ep)\ep}{\lambda^3}\int_{0}^{1-\delta}\cos(\lambda\log(x))x^{-1+\ep}\pa_xp (x,\alpha)dx
-\frac{(-1+\ep)\ep}{\lambda^3}\int_{0}^{1-\delta}\cos(\lambda\log(x))x^{\ep}\pa^2_x p(x,\alpha)dx\\
&=I_{1221}+I_{1222}+I_{1223}.
\end{align*}
In $I_{1222}$ we integrate by parts to obtain that
\begin{align*}
&I_{1222}=-\frac{(-1+\ep)\ep}{\lambda^3}\int_{0}^{1-\delta}\cos(\lambda\log(x))x^{-1+\ep}\pa_xp (x,\alpha)dx=
-\frac{(-1+\ep)\ep}{\lambda^4}\sin(\lambda\log(1-\delta))(1-\delta)^\ep\pa_x p((1-\delta),\alpha)\\
&+\frac{(-1+\ep)\ep^2}{\lambda^4}\int_{0}^{1-\delta}\sin(\lambda\log(x))x^{-1+\ep} \pa_x p(x,\alpha)dx
+\frac{(-1+\ep)\ep}{\lambda^4}\int_{0}^{1-\delta}\sin(\lambda\log(x))x^\ep \pa^2_x p(x,\alpha)dx\\
=&I_{12221}-\frac{\ep^2}{\lambda^2}I_{122}+I_{12222}.
\end{align*}
Then
\begin{align*}
\p{1+\frac{\ep^2}{\lambda^2}}I_{122}=I_{1221}+I_{12221}+I_{12222}+I_{1223}.
\end{align*}

And now we can check that $|I_{1221}|\leq C|\lambda|^{-3}\delta^{-1-\alpha}$, $|I_{12221}|\leq C|\lambda|^{-4}\delta^{-1-\alpha}$, $|I_{12222}|\leq C |\lambda|^{-4}\delta^{-1-\alpha}$ and $|I_{1223}|\leq C |\lambda|^{-3}\delta^{-1-\alpha}$. Therefore $|I_{122}|\leq C |\lambda|^{-2+\alpha}$ if we take $\delta=|\lambda|^{-1}$.

In addition $|I_{131}|\leq |\lambda|^{-2}\delta^{-1-\alpha}$, $|I_{1321}|\leq C|\lambda|^{-3}\delta^{-1-\delta}$, $|I_{1322}|\leq C|\lambda|^{-3}\delta^{-1-\delta}$. Therefore $|I_{13}|\leq C|\lambda|^{-1+\alpha}$ if we take $\delta=|\lambda|^{-1}$.

Finally $|I_{11}|,\, |I_{121}|\leq C |\lambda|^{-1}\delta^{-\alpha}$. Thus $|I_1|\leq C |\lambda|^{-1-\alpha}$ with a constant $C$ that does not depend either $\ep$ or $\alpha$.

The term $I_{3}$ is easier to bound than $I_{1}$. But, if we want the constant $C$ independent of $\alpha$, we still have integrate by parts twice  to get
\begin{align*}
&I_{3}=\int_{1+\delta}^\infty \sin(\lambda\log(x)x^{-2+\ep}p(x,\alpha)=\frac{1}{\lambda}\cos(\lambda\log(1+\delta))(1+\delta)^{-1+\ep}p((1+\delta),\alpha)\\
&+\frac{(-1+\ep)}{\lambda}\int_{1+\delta}^\infty \cos(\lambda\log(x))x^{-2+\ep}p(x,\alpha)dx
+\frac{1}{\lambda}\int_{1+\delta}^\infty \cos(\lambda\log(x))x^{-1+\ep}\pa_xp(x,\alpha)dx\\
&=I_{31}+I_{32}+I_{33}.
\end{align*}
And
\begin{align*}
&I_{32}=\frac{(-1+\ep)}{\lambda}\int_{1+\delta}^\infty \cos(\lambda\log(x))x^{-2+\ep}p(x,\alpha)dx=
-\frac{-1+\ep}{\lambda^2}\sin(\lambda\log(1+\delta))(1+\delta)^{-2+\ep}p((1+\delta),\alpha)\\
&-\frac{(-1+\ep)^2}{\lambda^2}\int_{1+\delta}^\infty \sin(\lambda\log(x))x^{-2+\ep}p(x,\alpha)dx
-\frac{(-1+\ep)}{\lambda^2}\int_{1+\delta}^\infty \sin(\lambda\log(x))x^{-1+\ep}\pa_x p(x,\alpha)dx\\
&=I_{321}-\frac{(-1+\ep)^2}{\lambda^2}I_{33}+I_{322}.
\end{align*}
Thus
\begin{align*}
\p{1+\frac{(-1+\ep)^2}{\lambda^2}}I_{3}=I_{31}+I_{321}+I_{322}+I_{33}.
\end{align*}
And we have that $|I_{31}|\leq C |\lambda|^{-1}\delta^{-\alpha}$, $|I_{321}|\leq |\lambda|^{-2}\delta^{-\alpha}$, $|I_{322}|\leq |\lambda|^{-2}\delta^{-\alpha}$ and $|I_{33}|\leq |\lambda|^{-1}\delta^{-\alpha}$. Thus, by taking $\delta=|\lambda|^{-1}$ we have that $|I_3|\leq C|\lambda|^{-1+\alpha}$ with a constant $C$ that does not depend either on $\alpha$ or $\ep$.

The term $I_{2}$  can be bounded by $C \lambda \delta^{2-\alpha}+C\delta$ independently of $\alpha$ and $\ep$. This bound comes from the estimates
\begin{align*}
\int_{1-\delta}^{1+\delta} \sin\p{\lambda\log(x)}x^{-2+\ep}\frac{1}{|x+1|^\alpha}dx\leq C\delta
\end{align*}
and from
\begin{align*}
 &\int_{1-\delta}^{1+\delta}\sin\p{\lambda\log(x)}x^{-2+\ep}\frac{\sign(x-1)}{|x-1|^\alpha}dx\\
 &=\int_{0}^{\delta}\p{\sin\p{\lambda\log(1+x)}(1+x)^{-2+\ep}-\sin\p{\lambda\log(1-x)}(1-x)^{-2+\ep}}x^{-\alpha}dx\\
 &\leq C\lambda \delta^{2-\alpha}.
\end{align*}
Thus, by taking $\delta=|\lambda|^{-1}$ we have that $|I_2|\leq C|\lambda|^{-1+\alpha}$ with a constant $C$ that does not depend either on $\alpha$ or $\ep$.

We have already proven 1.

In order to prove 2 we notice that we have an extra integration by parts on $x$ for the real part. Indeed,

\begin{align*}
&J=\int_{0}^\infty \cos\p{\lambda\log(x)}x^{-2+\ep}p(x,\alpha)dx\\
&=\frac{(1-\ep)}{\lambda}\int_{0}^\infty \sin\p{\lambda\log(x)}x^{-2+\ep}p(x,\alpha)dx-\frac{1}{\lambda}\int_{0}^\infty \sin\p{\lambda\log(x)}x^{-1+\ep}\pa_x p(x,\alpha)dx\\
&=J_1+J_2
\end{align*}

We have that the integral in $J_1$ coincides with $I$ in \eqref{I}. Then we have already proved that $|J_1|\leq C |\lambda|^{-2+\alpha}$. We just have to deal with $J_2$.  We have that
\begin{align*}
&J_2=-\frac{1}{\lambda}\int_{0}^\infty \sin\p{\lambda\log(x)}x^{-1+\ep}\pa_x p(x,\alpha)dx=\frac{\alpha}{\lambda}\p{\int_{0}^{1-\delta}...\,dx+\int_{1-\delta}^{1+\delta}...\,dx+\int_{1+\delta}^\infty...\,dx}\\
&=J_{21}+J_{22}+J_{23}.
\end{align*}
In $J_{21}$ we can integrate by parts twice to get
\begin{align*}
J_{21}=&-\frac{(-1+\ep)}{\lambda^2}\int_{0}^{1-\delta}\cos\p{\lambda\log(x)}x^{-1+\ep}\pa_xp(x,\alpha)dx
-\frac{1}{\lambda^2}\int_{0}^{1-\delta}\cos\p{\lambda\log(x)}x^{\ep}\pa^2_x p(x,\alpha)dx\\
&+\frac{1}{\lambda^2}\cos(\lambda\log(1-\delta))(1-\delta)^\ep\pa_x p((1-\delta),\alpha)\\
=&-\frac{(-1+\ep)^2}{\lambda^2}J_{21}-\frac{(-1+\ep)}{\lambda^3}\int_{0}^{1-\delta}\sin(\lambda\log(x))x^\ep\pa^2_x p(x,\alpha)dx\\
&-\frac{(-1+\ep)}{\lambda^3}\sin(\lambda\log(1-\delta))(1-\delta)^\ep\pa_x p((1-\delta),\alpha)-\frac{1}{\lambda^2}\int_{0}^{1-\delta}\cos\p{\lambda\log(x)}x^{\ep}\pa^2_x p(x,\alpha)dx\\
&+\frac{1}{\lambda^2}\cos(\lambda\log(1-\delta))(1-\delta)^\ep\pa_x p((1-\delta),\alpha)
=-\frac{-1+\ep}{\lambda^2}J_{21}+J_{2122}+J_{2121}+J_{213}+J_{211}.
\end{align*}
Thus
\begin{align*}
\p{1+\frac{-1+\ep}{\lambda^2}}J_{21}= J_{211}+J_{2121}+J_{2122}+J_{213}
\end{align*}
which implies
\begin{align}\label{J21}
J_{21}=\p{1+\frac{1-\ep}{\lambda^2+(-1+\ep)}}\p{J_{211}+J_{2121}+J_{2122}+J_{213}}
\end{align}
We have that $|J_{211}|\leq C |\lambda|^{-2}\delta^{-1-\alpha} $, $|J_{213}|\leq C |\lambda|^{-2}\delta^{-1-\alpha}$, $|J_{2121}|\leq C|\lambda|^{-3}\delta^{-1-\delta}$ and $|J_{2122}|\leq C|\lambda|^{-3}\delta{-1-\delta}$. Therefore by taking $\delta=|\lambda|^{-1}|$ we see from  \eqref{J21} that $J_{21}=J_{211}+J_{213}+R$ with $|R|\leq C |\lambda|^{-2+\alpha}$.

Now we need to add $J_{21}$ and $J_{23}$ to find a cancellation between them. First of that, we integrate by parts on $J_{23}$
\begin{align*}
J_{23}=&\frac{1}{\lambda^2}\int_{1+\delta}^\infty \cos(\lambda\log(x))x^{-1+\ep}\pa_x p(x,\alpha)dx-\frac{1}{\lambda^2}\int_{1+\delta}^\infty \cos(\lambda\log(x))x^{\ep}\pa^2_x p(x,\alpha)dx\\
&-\frac{1}{\lambda^2}\cos(\lambda\log(1+\delta))\pa_x p((1-\delta),\alpha)=J_{232}+J_{233}+J_{231}.
\end{align*}
We have that $|J_{232}|\leq C |\lambda|^{-2}\delta^{-\alpha}$.

To bound $J_{211}+J_{213}$ we can use that $|\cos(\lambda\log(1+\delta))-\cos(\lambda\log(1-\delta))|\leq C\lambda^2\delta^3$ thus  $|J_{211}+J_{213}|\leq C \delta^{2-\alpha}$. For $J_{213}+J_{233}$ we can use the same fact. We just need to focus on the integrals
\begin{align*}
&-\int_{0}^{1-\delta}\cos(\lambda\log(x))x^{\ep}\frac{1}{|x-1|^{2+\alpha}}dx +\int_{1+\delta}^2\cos(\lambda\log(x))x^{\ep}\frac{1}{|x-1|^{2+\alpha}}dx\\
&=-\int_{-1}^{-\delta}\cos(\lambda\log(1+x))(1+x)^\ep\frac{1}{|x|^{2+\alpha}}dx +\int_{\delta}^1\cos(\lambda\log(1+x))(1+x)^\ep\frac{1}{|x|^{2+\alpha}}dx\\
&=\int_{\delta}^1\p{\cos(\lambda\log(1+x))(1+x)^\ep-\cos(\lambda\log(1-x))}(1-x)^\ep\frac{1}{|x|^{2+\delta}}dx
\end{align*}
Thus $|J_{213}+J_{233}|\leq C\delta^{2-\alpha}$.

It is remained to bound $J_{22}$ where
\begin{align*}
J_{22}=-\frac{1}{\lambda}\int_{1-\delta}^{1+\delta} \sin\p{\lambda\log(x)}x^{-1+\ep}\pa_x p(x,\alpha)dx
\end{align*}
We notice that it is enough to the study  the decay of the integral
\begin{align*}
H=&\frac{\alpha}{\lambda}\int_{1-\delta}^{1+\delta} \sin\p{\lambda\log(x)}x^{-1+\ep}\frac{1}{|x-1|^{1+\alpha}}dx\\
=&\frac{\alpha}{\lambda}\int_{-\delta}^{\delta} \sin\p{\lambda\log(1+x)}(1+x)^{-1+\ep}\frac{1}{|x|^{1+\alpha}}dx\\
=&\frac{\alpha}{\lambda}\int_{-\delta}^{\delta}\p{ \sin\p{\lambda\log(1+x)}-\sin(\lambda x)}(1+x)^{-1+\ep}\frac{1}{|x|^{1+\alpha}}dx\\
&+\frac{\alpha}{\lambda}\int_{-\delta}^{\delta} \sin\p{\lambda x}\p{(1+x)^{-1+\ep}-1}\frac{1}{|x|^{1+\alpha}}dx.\\
\end{align*}
Since $$|\sin\p{\lambda\log(1+x)}-\sin(\lambda x)|\leq C |\lambda|x^2$$
we can conclude that
\begin{align*}
|H|\leq C \delta^{2-\alpha}+C|\lambda|^{-1}\delta^{1-\alpha}.
\end{align*}

\end{proof}

\begin{lemma}\label{hurwitz} Let $u\in C^1(\T)$ an even function such that, $u(x)\geq 0$, $u_x(x)\geq 0$ for $x\in [0,\pi]$ and $u(0)=0$. Then
\begin{align*}
\left|\int_{0}^\infty x^{-i\lambda-1-\alpha}u(x)dx\right|\leq \frac{\alpha}{\sqrt{\lambda^2+\alpha^2}}\int_{0}^\infty x^{-1-\alpha}u(x)dx+ \frac{C+C_\epsilon\lambda^{1-\alpha +\epsilon}}{\sqrt{\lambda^2+\alpha^2}}||u||_{L^\infty(\T)}.
\end{align*}
\end{lemma}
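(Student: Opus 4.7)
The plan is to integrate by parts first. Since $u(0)=0$ and the boundary terms vanish at both endpoints (using $u(x)=O(x)$ near $0$ together with $\alpha>0$ and boundedness of $u$ at infinity),
\begin{equation*}
\int_0^\infty x^{-i\lambda-1-\alpha}u(x)\,dx = \frac{1}{i\lambda+\alpha}\int_0^\infty x^{-i\lambda-\alpha}u_x(x)\,dx.
\end{equation*}
The prefactor $|i\lambda+\alpha|^{-1}=(\lambda^2+\alpha^2)^{-1/2}$ is exactly the common denominator in the statement, so it suffices to estimate $\big|\int_0^\infty x^{-i\lambda-\alpha}u_x\,dx\big|$ by $\alpha\int_0^\infty x^{-1-\alpha}u\,dx+(C+C_\epsilon\lambda^{1-\alpha+\epsilon})\|u\|_{L^\infty}$. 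I split the domain at $x=\pi$.

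On $[0,\pi]$ the hypothesis $u_x\geq 0$ gives $\big|\int_0^\pi x^{-i\lambda-\alpha}u_x\,dx\big|\leq\int_0^\pi x^{-\alpha}u_x\,dx$; one more integration by parts (with the boundary at $0$ vanishing because $\alpha<1$ and $u(x)=O(x)$) produces $\alpha\int_0^\pi x^{-1-\alpha}u\,dx+\pi^{-\alpha}u(\pi)$, which is bounded by $\alpha\int_0^\infty x^{-1-\alpha}u\,dx+C\|u\|_{L^\infty}$ and already accounts for the first term of the target.

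The tail $\int_\pi^\infty x^{-i\lambda-\alpha}u_x\,dx$ is where the $\lambda^{1-\alpha+\epsilon}$ contribution must be produced. Since $u$ is even and $2\pi$-periodic, $u_x$ is odd and satisfies $u_x\bigl(2(m+1)\pi-y\bigr)=-u_x(y)$; grouping consecutive intervals $[(2m+1)\pi,(2m+3)\pi]$ (one full period centred at $(2m+2)\pi$) and changing variables gives
\begin{equation*}
\int_\pi^\infty x^{-i\lambda-\alpha}u_x\,dx = (2\pi)^{-s}\int_0^\pi u_x(y)\,D(s,y/(2\pi))\,dy,
\end{equation*}
where $s=i\lambda+\alpha$ and $D(s,q):=\sum_{m=0}^\infty\bigl[(m+1+q)^{-s}-(m+1-q)^{-s}\bigr]$; each summand being $O\bigl(q|s|(m+1)^{-\alpha-1}\bigr)$ by the mean value theorem, the series converges absolutely for $\alpha>0$.

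To estimate $D(s,q)$ I split the series at an index $M\sim q|s|$. The head is controlled by the triangle inequality, $\sum_{m<M}\bigl[(m+1-q)^{-\alpha}+(m+1+q)^{-\alpha}\bigr]\lesssim M^{1-\alpha}/(1-\alpha)$, while the tail uses the mean value bound, $\sum_{m\geq M}2q|s|(m+1-q)^{-\alpha-1}\lesssim q|s|M^{-\alpha}/\alpha$. Balancing produces $|D(s,q)|\leq C_\alpha(q|s|)^{1-\alpha}$. Substituting and using $u_x\geq 0$ on $[0,\pi]$ to bound $\int_0^\pi u_x(y)\,y^{1-\alpha}dy\leq\pi^{1-\alpha}u(\pi)\leq\pi^{1-\alpha}\|u\|_{L^\infty}$ yields $\big|\int_\pi^\infty x^{-i\lambda-\alpha}u_x\,dx\big|\leq C_\alpha|\lambda|^{1-\alpha}\|u\|_{L^\infty}$, at least as strong as the $C_\epsilon\lambda^{1-\alpha+\epsilon}\|u\|_{L^\infty}$ term in the target. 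The main obstacle is exactly this sharp estimate on $D(s,q)$: the naive absolute bound $q|s|/\alpha$ is too weak for large $\lambda$, and the head/tail balancing is essential in order to replace a factor of $|s|$ by $|s|^{1-\alpha}$.
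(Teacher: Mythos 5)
Your argument is correct and follows the same skeleton as the paper's proof — integrate by parts to produce the factor $(\lambda^2+\alpha^2)^{-1/2}$, then exploit the $2\pi$-periodicity and oddness of $u_x$ to fold the tail onto $[0,\pi]$ against a Hurwitz-zeta-type kernel (your $D(s,q)$ is exactly the paper's $Z^*(i\lambda+\alpha)$ with $n=m+1$) — but the two key steps are handled differently. For the leading term, the paper keeps the full folded kernel $Z(i\lambda+\alpha,x)$, isolates $x^{-i\lambda-\alpha}$, and recovers $\alpha\int_0^\infty x^{-1-\alpha}u\,dx$ via the comparison $x^{-\alpha}\leq Z(\alpha,x)+C$ followed by unfolding; your direct split at $x=\pi$, with a second integration by parts on $[0,\pi]$ and the positivity of $u$ to enlarge $\int_0^\pi$ to $\int_0^\infty$, is a cleaner route to the same term. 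For the oscillatory sum, the paper bounds each summand using the elementary inequality $|\sin t|\leq |t|^{\beta}$ with $\beta=1-\alpha+\epsilon$, which yields $C+C_\epsilon|\lambda|^{1-\alpha+\epsilon}$ with constants uniform in $\alpha$; you instead split the series at $M\sim q|s|$ (triangle inequality for the head, mean-value bound for the tail), which gives the sharper power $|\lambda|^{1-\alpha}$ with no $\epsilon$-loss — in the spirit of the improvements discussed in Remark \ref{chamizo} — but at the price of a constant of size roughly $\alpha^{-1}+(1-\alpha)^{-1}$. This trade-off is harmless for the lemma at fixed $\alpha$, but note that the paper later uses the $\alpha$-uniformity of $C$ and $C_\epsilon$ to declare the constants $C_2$, $C_3$ universal, so if that uniformity is intended your constants degrade as $\alpha\to 0^+$ and $\alpha\to 1^-$. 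One small point worth a sentence in a written version: $\int_0^\infty x^{-i\lambda-\alpha}u_x\,dx$ is only conditionally convergent (since $x^{-\alpha}|u_x|$ is not integrable at infinity), so the grouping into periods should be phrased via partial integrals up to period endpoints; the absolute convergence of the folded series, which you do verify with the mean-value bound, then justifies the rearrangement — the paper is equally silent on this.
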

\begin{proof} First we will integrate by parts to get
 \begin{align*}
 \int_{0}^\infty x^{-i\lambda-1-\alpha}u(x)dx=\frac{1}{-i\lambda+\alpha}\int_{0}^\infty x^{i\lambda-\alpha}u_x(x)dx.
 \end{align*}
We will use that $u_x(x)$ is a $2\pi-$periodic function to write
\begin{align*}
\int_{0}^\infty x^{-i\lambda-\alpha}u_x(x)dx=\sum_{n=0}^\infty \int_{2\pi n}^{2\pi(n+1)}x^{-i\lambda-\alpha}u_x(x)dx=\sum_{n=0}^\infty\int_{0}^{2\pi}\p{x+2\pi n}^{-i\lambda-\alpha}u_x(x)dx.
\end{align*}
and because $u(x)$ is even
\begin{align*}
&\int_{0}^{2\pi}x^{-i\lambda-\alpha}u_x(x)dx=\int_{0}^\pi (x+2\pi n)^{-i\lambda-\alpha}u_x(x)dx+\int_{\pi}^{2\pi} (x+2\pi n)^{-i\lambda-\alpha}u_x(x)dx\\
&=\int_{0}^\pi (x+2\pi n)^{-i\lambda-\alpha}u_x(x)dx+\int_{-\pi}^{2\pi}(x+2\pi(n+1)) ^{-i\lambda-\alpha}u_x(x)dx\\
&=\int_{0}^\pi \p{(x+2\pi n)^{-i\lambda-\alpha}-(-x+2\pi(n+1)) ^{-i\lambda-\alpha}}u_x(x)dx.
\end{align*}
Thus
\begin{align*}
\left|\int_{0}^\infty x^{-i\lambda-1-\alpha}u(x)dx\right|=\frac{1}{\sqrt{\lambda^2+\alpha^2}}\left|\int_{0}^\pi Z(i\lambda+\alpha,x)u_x(x)dx\right|,
\end{align*}
where
\begin{align*}
Z(i\lambda+\alpha,x)=&\sum_{n=0}^\infty (x+2\pi n)^{-i\lambda-\alpha}-(-x+2\pi(n+1))^{-i\lambda-\alpha}\\
&= x^{-i\lambda-\alpha}+\sum_{n=1}^\infty(x+2\pi n)^{-i\lambda-\alpha}-(-x+2\pi n)^{-i\lambda-\alpha}\\
&=x^{-i\lambda-\alpha}+(2\pi)^{-i\lambda-\alpha}\sum_{n=1}^\infty \p{n+\frac{x}{2\pi}}^{-i\lambda-\alpha}-\p{n-\frac{x}{2\pi}}^{-i\lambda-\alpha}\\
\end{align*}
Let us write $Z^*(i\lambda+\alpha)=\sum_{n=1}^\infty \p{n+\frac{x}{2\pi}}^{-i\lambda-\alpha}-\p{n-\frac{x}{2\pi}}^{-i\lambda-\alpha}$. We can split $Z^*=Z^*_1+Z^*_2$ with
\begin{align*}
Z^*_1=\sum_{n=1}^\infty \p{\p{n+\frac{x}{2\pi}}^{-\alpha}-\p{n-\frac{x}{2\pi}}^{-\alpha}}\p{n+\frac{x}{2\pi}}^{-i\lambda}\\
Z^*_2=\sum_{n=1}^\infty \p{\p{n+\frac{x}{2\pi}}^{-i\lambda}-\p{n-\frac{x}{2\pi}}^{-i\lambda}}\p{n-\frac{x}{2\pi}}^{-\alpha}\\
\end{align*}
Since $0\leq \frac{x}{2\pi}\leq \frac{1}{2}$ we have that $|Z^*_1|\leq C$ where $C$ does not depend on neither $\alpha$ nor $x$. For $Z^*_2$ we have that
\begin{align*}
Z_2^*= & \sum_{n=1}^\infty \p{n-\frac{x}{2\pi}}^{-\alpha}\p{\cos\p{\lambda\log\p{n+\frac{x}{2\pi}}}-\cos\p{\lambda\log\p{n-\frac{x}{2\pi}}}}\\
& +i \sum_{n=1}^\infty \p{n-\frac{x}{2\pi}}^{-\alpha}\p{\sin\p{\lambda\log\p{n+\frac{x}{2\pi}}}-\sin\p{\lambda\log\p{n-\frac{x}{2\pi}}}}
\end{align*}
We will bound the real part of $Z_2^*$. The estimation of the imaginary part follows same steps. Using elementary trigonometric formulas we can write
\begin{align*}
\text{Re}\,Z_2^* =2\sum_{n=1}^\infty \p{n-\frac{x}{2\pi}}^{-\alpha}\cos\p{\frac{\lambda}{2}\log\p{\p{n+\frac{x}{2\pi}}\p{n-\frac{n}{2\pi}}}}
\sin\p{\frac{\lambda}{2}\log\p{\frac{n+\frac{x}{2\pi}}{n-\frac{x}{2\pi}}}}.
\end{align*}
We notice that $$\frac{n+\frac{x}{2\pi}}{n-\frac{x}{2\pi}}=1+\frac{\frac{x}{\pi n}}{1-\frac{x}{2\pi n}}$$ and that for all $0<\beta<1$ , $|\sin(x)\leq |x|^\beta$, thus
 \begin{align*}
 \left|\sin\p{\frac{\lambda}{2}\log\p{\frac{n+\frac{x}{2\pi}}{n-\frac{x}{2\pi}}}}\right|\leq \left|\frac{\lambda}{2}\log\p{\frac{n+\frac{x}{2\pi}}{n-\frac{x}{2\pi}}}\right|^\beta\leq \left|\frac{\frac{\lambda x}{2\pi n}}{1-\frac{x}{2\pi n}}\right|^\beta
\leq C \left|\frac{\lambda}{n}\right|^{\beta}.
 \end{align*}
Taking $\beta=1-\alpha+\epsilon$ we have that
\begin{align*}
|\text{Re}\, Z_2^*|\leq C|\lambda|^{1-\alpha+\epsilon}\sum_{n=1}^\infty \left|n-\frac{x}{2\pi}\right|^{-\alpha}n^{-1+\alpha-\ep}\leq C_{\epsilon}|\lambda|^{1-\alpha+\epsilon}.
\end{align*}
Proceeding in a similar way with $\text{Im} Z_2^*$ we have that $|Z^*(i\lambda+\alpha)|\leq C +C_{\epsilon}|\lambda|^{1-\alpha+\epsilon}.$

 This fact allows us to prove that
\begin{align*}
\left|\int_{0}^\infty x^{-i\lambda-1-\alpha} u(x)dx\right|\leq \frac{1}{\sqrt{\lambda^2+\alpha^2}}\int_{0}^\pi x^{-\alpha} \left|u_x(x)\right|dx+\frac{C+C_\epsilon |\lambda|^{1-\alpha+\epsilon}}{\sqrt{\lambda^2+\alpha^2}}\int_{0}^\pi  \left|u_x(x)\right|dx
\end{align*}
But since $u_x(x)\geq 0 $ for $x\in [0,\pi]$, we can remove the absolute value inside the integral to get
\begin{align*}
\left|\int_{0}^\infty x^{-i\lambda-1-\alpha} u(x)dx\right|\leq \frac{1}{\sqrt{\lambda^2+\alpha^2}}\int_{0}^\pi x^{-\alpha} u_x(x)dx+\frac{C+C_{\epsilon}|\lambda|^{1-\alpha+\epsilon}}{\sqrt{\lambda^2+\alpha^2}}||u||_{L^\infty}.
\end{align*}
To finish the proof we notice that
$$Z(\alpha,x)=x^{-\alpha}+\sum_{n=1}^\infty \p{(x+2\pi n)^{-\alpha}-(-x+2\pi n)^{-\alpha}}$$
is a positive function that satisfies
$$x^{-\alpha}\leq Z(\alpha,x)+C.$$
Therefore
\begin{align*}
&\left|\int_{0}^\infty x^{-i\lambda-1-\alpha} u(x)dx\right|\leq \frac{1}{\sqrt{\lambda^2+\alpha^2}}\int_{0}^\pi Z(\alpha,x) u_x(x)dx+\frac{C+C_{\epsilon}|\lambda|^{1-\alpha+\epsilon}}{\sqrt{\lambda^2+\alpha^2}}||u||_{L^\infty}\\
&=\frac{1}{\sqrt{\lambda^2+\alpha^2}}\int_{0}^\infty x^{-\alpha} u_x(x)dx+\frac{C+C_{\epsilon}|\lambda|^{1-\alpha+\epsilon}}{\sqrt{\lambda^2+\alpha^2}}||u||_{L^\infty}.
\end{align*}
Then we can achieve the conclusion of the lemma just integrating by parts.

\end{proof}

Now we can prove the main lemma of this section
\begin{lemma} \label{mainlemma} Let $u\in C^1(\T)$ satisfying, H2, H3 and H4. Then
\begin{align*}
\lim_{\ep\to 0^+} \int_{-\infty}^\infty A(\lambda,\ep,\alpha)U(\lambda,\ep,\alpha)d\lambda= 2\pi\alpha\overline{c}_\alpha U(0,0,\alpha)+\int_{-\infty}^\infty A_0(\lambda,\alpha)U(\lambda,0,\alpha)d\lambda.
\end{align*}
where
\begin{enumerate}
\item [P1.] $$A_0(\lambda,\alpha)=-\overline{c}_\alpha\lim_{\ep\to 0^{+}}\text{Re}\int_{0}^\infty x^{-i\lambda-2+\ep}\p{\frac{\sign(x-1)}{|x-1|^\alpha}+\frac{1}{|x+1|^\alpha}}dx\quad \text{for $|\lambda|>0$}.$$
\item [P2.] $A_0(\lambda,\alpha)\leq 0$ is a bounded function.
\end{enumerate}
\end{lemma}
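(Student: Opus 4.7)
The plan is to adapt the scheme of Lemma~\ref{mainlemma2} to the periodic kernel $p(x,\alpha):=\frac{\sign(x-1)}{|x-1|^\alpha}+\frac{1}{|x+1|^\alpha}$. First I split
\begin{align*}
I(\ep)=\int_{-\infty}^\infty A(\lambda,\ep,\alpha)U(\lambda,\ep,\alpha)\,d\lambda=I_1(\ep)+I_2(\ep),
\end{align*}
with $I_1(\ep)$ pairing $A$ against the real, even function $U(\lambda,0,\alpha)$ and $I_2(\ep)$ pairing it against $U(\lambda,\ep,\alpha)-U(\lambda,0,\alpha)$. The aim in $I_1(\ep)$ is to isolate a $\frac{\ep}{\lambda^2+\ep^2}$-type piece that concentrates at $\lambda=0$ and yields the Dirac mass, then to pass to the limit in the remainder by dominated convergence using Lemmas~\ref{decaytotal} and~\ref{hurwitz}.

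For the singular extraction I split the defining integral of $A$ as $A=A_1+A_2$ with $A_1=-\overline{c}_\alpha\int_0^{1/2}x^{i\lambda-2+\ep}p(x,\alpha)\,dx$, and integrate $A_1$ by parts twice. Two algebraic facts are crucial: $p(0,\alpha)=0$ kills the boundary at $x=0$ after the first integration by parts (because $\ep>0$), and $p_x(0,\alpha)=-2\alpha$ supplies the Dirac-mass coefficient after the second. Using the partial-fraction identity
\begin{align*}
\frac{1}{(i\lambda-1+\ep)(i\lambda+\ep)}=\frac{1}{i\lambda-1+\ep}-\frac{1}{i\lambda+\ep},
\end{align*}
the pole $\frac{1}{i\lambda+\ep}$ gives the real part $-\frac{\ep}{\lambda^2+\ep^2}$. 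Combining the boundary term at $x=1/2$ with the remaining $p_{xx}$-integral, the telescoping $\int_0^{1/2}p_{xx}(x,\alpha)\,dx-p_x(1/2,\alpha)=-p_x(0,\alpha)=2\alpha$ shows that the coefficient of $\frac{\ep}{\lambda^2+\ep^2}$ is exactly $2\alpha\overline{c}_\alpha$. A rescaling $\lambda\mapsto\ep\lambda$ plus DCT then produces the Dirac contribution $2\pi\alpha\overline{c}_\alpha U(0,0,\alpha)$. The remaining pieces of $A(\lambda,\ep,\alpha)$ have a finite pointwise limit $A_0(\lambda,\alpha)$, are bounded in $\ep$ on compact sets in $\lambda$, and decay in $\lambda$ by Lemma~\ref{decaytotal}; combined with the majorant
\begin{align*}
U(\lambda,0,\alpha)\leq \frac{C}{\lambda^2+\alpha^2}\bigl(U(0,0,\alpha)+(1+|\lambda|^{2-2\alpha+2\epsilon})\|u\|_{L^\infty}^2\bigr)
\end{align*}
coming from Lemma~\ref{hurwitz}, this supplies the integrable majorant needed for DCT and yields the term $\int_{-\infty}^\infty A_0(\lambda,\alpha)U(\lambda,0,\alpha)\,d\lambda$ in the limit.

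For $I_2(\ep)$, parity reduces matters to $\int\text{Im}(A)\,\text{Im}(U)\,d\lambda$; since $\text{Im}\,U(\lambda,0,\alpha)=0$ and the troublesome singular pieces of $\text{Im}(A)$ near $\lambda=0$ are compensated either by the $\sin(\lambda\log(x/y))$ factor or by the explicit $\ep\lambda$ appearing in the analogue of \eqref{IMU}, a rescaling identical to the one in Lemma~\ref{mainlemma2} gives $I_2(\ep)\to 0$. Property P1 is then the very definition of $A_0$. For P2 I would fold the integral via $x\mapsto 1/x$ and integrate by parts twice to express
\begin{align*}
A_0(\lambda,\alpha)=\frac{\overline{c}_\alpha}{\lambda^2}\int_0^1(1-\cos(\lambda\log x))\,\pa_x(xG_0(x,\alpha))\,dx
\end{align*}
for an explicit $G_0$; then $A_0\leq 0$ reduces to $\pa_x(xG_0)\leq 0$, to be verified by writing $f(x,\alpha)-f(-x,\alpha)=\int_{-1}^1\pa_sf(sx,\alpha)\,ds$ as in Lemma~\ref{mainlemma2}, with the opposite overall sign coming from the $\sign(x-1)$ jump in $p$. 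Boundedness of $A_0$ away from $\lambda=0$ follows from Lemma~\ref{decaytotal}, and near $\lambda=0$ from the finite limit produced by the singular extraction.

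The main obstacle I anticipate is the sign analysis in P2: the $\sign(x-1)$ factor makes the folded integrand less symmetric than in the real-line case, and tracking this jump through the two integrations by parts until $\pa_x(xG_0)$ has a sign that can be read off is where an honest computation is unavoidable. Once this is settled, the rest of the argument is a technical adaptation of Lemma~\ref{mainlemma2} powered by the decay estimates of Lemma~\ref{decaytotal} and the pointwise control of $U(\lambda,0,\alpha)$ from Lemma~\ref{hurwitz}.
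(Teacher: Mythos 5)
Your proposal follows essentially the same route as the paper: split off the part of the kernel integral over $(0,\tfrac12)$, integrate by parts twice using $p(0,\alpha)=0$ and $\pa_xp(0,\alpha)=-2\alpha$, isolate the $\frac{\ep}{\lambda^2+\ep^2}$ pole whose coefficient telescopes to $2\alpha\overline{c}_\alpha$ and rescale to get the Dirac contribution $2\pi\alpha\overline{c}_\alpha U(0,0,\alpha)$, pass to the limit in the remainder by dominated convergence using the $\ep$-uniform decay of $A$ from Lemma \ref{decaytotal} together with the Hurwitz-type bound on $U$ from Lemma \ref{hurwitz}, kill the $\ep$-difference term by parity and the $O(|\lambda|)$ vanishing of $\text{Im}\,U$, and prove the sign of $A_0$ by folding $x\mapsto 1/x$ and two further integrations by parts. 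The paper organizes the limit by first splitting $|\lambda|>2$ versus $|\lambda|<2$ (using the combined bound $|AU|\leq C_\epsilon|\lambda|^{-1-\alpha+\epsilon}$ on the tail) rather than your $I_1/I_2$ splitting, but that is cosmetic.

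The one substantive step you leave open is exactly the one you flag: the sign computation in P2. Two remarks there. First, on bookkeeping: in the paper's normalization the folded quantity is
\begin{align*}
F(x,0,\alpha)=x^{-2}\p{(1+x)^{-\alpha}-(1-x)^{-\alpha}}+x^{\alpha}\p{(1-x)^{-\alpha}+(1+x)^{-\alpha}},\qquad G=\pa_x\p{xF(x,0,\alpha)},
\end{align*}
and the paper checks that $G\geq 0$ and $\pa_xG\geq 0$, so that $\frac{1}{\lambda^2}\int_0^1(1-\cos(\lambda\log x))\pa_x(xG)\,dx\geq 0$; the negativity of $A_0$ then comes entirely from the prefactor $-\overline{c}_\alpha$ in $A=-\overline{m_p}$, not from a sign reversal of the folded integrand caused by the $\sign(x-1)$ jump. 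So if you absorb that minus into your $G_0$ your claim $\pa_x(xG_0)\leq 0$ is equivalent, but do not expect the jump itself to flip the sign; chasing that would lead you astray. Second, the verification that $G$ and $\pa_xG$ are nonnegative is an honest (if elementary) computation on the explicit $F$ above, analogous to the $f(x,\beta)$, $\pa_sf(sx,\beta)$ computation in Lemma \ref{mainlemma2}; until it is carried out, P2 is asserted rather than proved. With that computation supplied, your argument matches the paper's proof.
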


\begin{proof}
The proof of this lemma is similar to that one for lemma \ref{mainlemma} in section \ref{line}. The main difference is that in \ref{mainlemma} we could use that the integral $\int_{0}^\infty x^{i\lambda-2-\beta}u(x)$ decays as $|\lambda|^{-1}$ and then $U(\lambda,\ep,\beta)$ decays as $|\lambda|^{-2}$. Since this is not the case now we have to use the decays in lemmas \ref{decaytotal} and \ref{hurwitz} to get that
\begin{align}\label{cae}|A(\lambda,\ep,\alpha)U(\lambda,\ep,\alpha)|\leq C_{\epsilon} |\lambda|^{-1-\alpha+\epsilon}\end{align}
with $C_{\epsilon}$ independent of $\alpha$ and $\ep$, $\epsilon$ arbitrarily small and $|\lambda|>2$. Thus

\begin{align*}
&\int_{-\infty}^\infty A(\lambda,\ep,\alpha)U(\lambda,\ep,\alpha)d\lambda=\text{Re}\p{\int_{-\infty}^\infty A(\lambda,\ep,\alpha)U(\lambda,\ep,\alpha)d\lambda}\\
=&\int_{-\infty}^\infty \text{Re}\p{A(\lambda,\ep,\alpha)}\text{Re}\p{U(\lambda,\ep,\alpha)}d\lambda -\int_{-\infty}^\infty \text{Im}\p{A(\lambda,\ep,\alpha)}\text{Im}\p{U(\lambda,\ep,\alpha)}d\lambda.
\end{align*}
Now we can split the integrals in $\int_{|\lambda|>2}...+\int_{|\lambda|<2}d\lambda$. Because of \eqref{cae} we have that
\begin{align*}
\lim_{\ep\to 0^+}\int_{|\lambda|>2}\text{Re}\p{A(\lambda,\ep,\alpha)}\text{Re}\p{U(\lambda,\ep,\alpha)}d\lambda=
\int_{|\lambda|>2}A_0(\lambda,\alpha)U(\lambda,0,\alpha)d\lambda,
\end{align*}
and
\begin{align*}
\lim_{\ep\to 0^+}\int_{|\lambda|>2}\text{Im}\p{A(\lambda,\ep,\alpha)}\text{Im}\p{U(\lambda,\ep,\alpha)}d\lambda=0.
\end{align*}
For the integral on the region $|\lambda|<2$ we can perform similar computations to that one in lemma \ref{mainlemma} to achieve the conclusion of the lemma. In order to make shorter the proof we will skip some details and we will use $a^\ep(\lambda)\sim b^\ep(\lambda)$ if $$\lim_{\ep\to 0^+}\int_{|\lambda|<2}(a^{\ep}(\lambda)-b^\ep(\lambda))f(\lambda)d\lambda =\int_{|\lambda|<2}c(\lambda)f(\lambda)$$ for any smooth function $f$ and with $c(\lambda)$ a bounded function.

We focus on the integral
\begin{align*}
\int_{0}^\infty x^{i\lambda-2+\ep}p(x,\alpha)dx\sim \int_{0}^\frac{1}{2} x^{i\lambda-2+\ep}p(x,\alpha)dx,
\end{align*}
with $p(x,\alpha)=\sign(x-1)|x-1|^{-\alpha}+|x+1|^\alpha$.

And integration by parts yields
\begin{align*}
&\int_{0}^\frac{1}{2}x^{i\lambda-2+\ep}p(x,\alpha)dx=\frac{1}{i\lambda-1+\ep} 2^{-i\lambda+1-\ep}p(2^{-1},\alpha)-\frac{1}{i\lambda-1+\ep}
\int_{0}^\frac{1}{2}x^{i\lambda-1+\ep}\pa_x p(x,\alpha) dx\\
&\sim -\frac{1}{i\lambda-1}
\int_{0}^\frac{1}{2}x^{i\lambda-1+\ep}\pa_x p(x,\alpha) dx=-\frac{1}{i\lambda+\ep}2^{i\lambda+\ep}\pa_x p(2^{-1},\alpha)+\frac{1}{i\lambda-1}
\frac{1}{i\lambda+\ep}\int_{0}^\frac{1}{2}x^{i\lambda+\ep}\pa^2_x p(x,\alpha)dx\\
&\sim -\frac{1}{i\lambda-1}\frac{1}{i\lambda+\ep}2^{i\lambda+\ep}\pa_x p(2^{-1},\alpha)+\frac{1}{i\lambda-1}
\frac{1}{i\lambda+\ep}\int_{0}^\frac{1}{2}x^{i\lambda+\ep}\pa^2_x p(x,\alpha)dx\\
&\sim -\frac{1}{i\lambda-1}\frac{1}{i\lambda+\ep}2^{i\lambda}\pa_x p(2^{-1},\alpha)+\frac{1}{i\lambda-1}
\frac{1}{i\lambda+\ep}\int_{0}^\frac{1}{2}x^{i\lambda}\pa^2_x p(x,\alpha)dx
\end{align*}
Then
\begin{align*}
\text{Re}\p{\int_{0}^\infty x^{i\lambda-2+\ep}p(x,\alpha)dx}\sim \frac{\ep}{\ep^2+\lambda^2}\frac{1}{1+\lambda^2}\p{\cos(\lambda\log(2))\pa_x  p(2^{-1},\alpha)-
\int_{0}^\frac{1}{2}\cos(\lambda\log(x))\pa^2_x\pa_x^2 p(x,\alpha)dx}.
\end{align*}
From this last expression we can conclude that
\begin{align*}
\lim_{\ep\to 0^+}\int_{|\lambda|<2} A(\lambda,\ep,\alpha)U(\lambda,\ep,\alpha)d\lambda =-\pi\overline{c}_\alpha\pa_x p(0,\alpha)U(0,0,\alpha)+\int_{|\lambda|<2}A_0(\lambda,\alpha)U(\lambda,0,\alpha)d\lambda,
\end{align*}
where $A_0(\alpha,\lambda)$ is a bounded function and $\pa_x p(0,\alpha)=-2\alpha$.

In addition we have that
\begin{align*}
\text{Im}\p{\int_{0}^\infty x^{i\lambda-2+\ep}p(x,\alpha)dx}\sim \frac{\lambda}{\ep^2+\lambda^2}g(\lambda)
\end{align*}
where $g(\lambda)$ is a bounded function. To pass to the limit in
\begin{align*}
\int_{|\lambda|<2} \text{Im}(A(\lambda,\ep,\alpha))\text{Im} (U(\lambda,\ep,\alpha)d\lambda
\end{align*}
we take advantage of the fact that $Im(U(\lambda,\ep,\alpha)d\lambda\leq C|\lambda|$. We finally have that
\begin{align*}
\lim_{\ep\to 0^+}\int_{|\lambda|<2} \text{Im}(A(\lambda,\ep,\alpha))\text{Im} (U(\lambda,\ep,\alpha)d\lambda=0.
\end{align*}

 Finally we prove that negativity of $A_0(\lambda,\alpha)$. This proof follows similar steps that the proof of the positivity of $B_0(\lambda,\beta)$ in lemma \ref{mainlemma}. We have that, for $\lambda>0$,
 \begin{align*}
I= \int_{0}^\infty \cos(\lambda\log(x))p(x,\alpha)dx =\int_{0}^1\cos(\lambda\log(x)) F(x,\ep,\alpha)dx,
 \end{align*}
 where $$F(x,\ep,\alpha)=x^{-2+\ep}\p{-(1-x)^{-\alpha}+(1+x)^{-\alpha}}+x^{\alpha-\ep}\p{(1-x)^{-\alpha}+(1+x)^{-\alpha}}.$$
We integrate by parts to have that
\begin{align*}
I=-\frac{1}{\lambda}\int_{0}^1\sin(\lambda\log(x))\pa_x\p{xF(x,\ep,\alpha)}dx.
\end{align*}
For $\lambda>0$ we pass to the limit as we did for $B_0(\lambda,\beta)$ in lemma \ref{mainlemma}. This yields
\begin{align*}
\lim_{\ep\to 0}I=-\frac{1}{\lambda}\int_{0}^1\sin(\lambda\log(x))G(x,\alpha)dx,
\end{align*}
where
$G(x,\alpha)=\pa_x(xF(x,0,\alpha))$  is positive and its derivative $\pa_x G(x,\alpha)$ is also positive. Then an integration by part shows that $\lim_{\ep\to 0}I\geq0$. Since $A_0(\lambda,\alpha)=-\overline{c}_\alpha\lim_{\ep\to 0^+} I$ we have achieved the last conclusion of the lemma.
\end{proof}

\begin{rem}The main difference between the ranges $0<\alpha<1$ and $1<\alpha<2$ is that in lemma \ref{mainlemma2} the function $B_0(\lambda,\alpha)$ is positive and in lemma \ref{mainlemma} the function $A_0(\lambda,\alpha)$ is negative.
\end{rem}

Applying lemma \ref{mainlemma} to \eqref{dos} yields
\begin{align}\label{esta}
&-(1+\alpha)\int_{0}^\infty \frac{u(x)\Lambda^{\alpha-1} Hu(x) }{x^{2+\alpha}}dx\nonumber\\
&=\frac{1+\alpha}{2\pi}\left(C(\alpha)U(0,0,\alpha)+\int_{-\infty}^\infty A_0(\lambda,\alpha)U(\lambda,0,\alpha)d\lambda\right),
\end{align}
where
\begin{align*}
U(\lambda,0,\alpha)=\left|\int_{0}^\infty x^{i\lambda-1-\alpha}u(x)dx\right|^2.
\end{align*}

Unlike the function $B_0(\lambda,\alpha)$ which is positive, the function $A_0(\lambda,\alpha)$ is negative an then we have to work further to be able to conclude the blow up of solutions.

We will need the following lemma
\begin{lemma} \label{decay} Let $0<\alpha<1$ and $|\lambda|> 2$. Then
\begin{align*}
\left|A_0(\lambda,\alpha)\right|\leq \frac{C}{|\lambda|^{2-\alpha}}
\end{align*}
where $C$ is a universal constant which not depend on $\alpha$.
\end{lemma}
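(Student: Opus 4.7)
The plan is to obtain the lemma as an immediate corollary of part 2 of lemma \ref{decaytotal}. Recall that $A_0(\lambda,\alpha)$ is defined (for $|\lambda|>0$) by
\begin{align*}
A_0(\lambda,\alpha) = -\overline{c}_\alpha\lim_{\ep\to 0^+}\text{Re}\int_0^\infty x^{-i\lambda-2+\ep}\left(\frac{\sign(x-1)}{|x-1|^\alpha}+\frac{1}{|x+1|^\alpha}\right)dx = \lim_{\ep\to 0^+}\text{Re}\bigl(A(\lambda,\ep,\alpha)\bigr),
\end{align*}
so the statement is precisely the $\ep\to 0^+$ limit of the pointwise bound on $\text{Re}(A(\lambda,\ep,\alpha))$.

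First I would note that lemma \ref{decaytotal}(2) supplies, for every $|\lambda|>2$, $0<\alpha<1$ and every admissible $\ep>0$, the inequality $|\text{Re}(A(\lambda,\ep,\alpha))|\leq C|\lambda|^{-2+\alpha}$ with a universal constant $C$ that depends neither on $\alpha$ nor on $\ep$. Since the convergence $\text{Re}(A(\lambda,\ep,\alpha))\to A_0(\lambda,\alpha)$ was already established in the proof of lemma \ref{mainlemma} as a pointwise limit in $\lambda$, one can simply let $\ep\to 0^+$ in this inequality to obtain $|A_0(\lambda,\alpha)|\leq C|\lambda|^{-2+\alpha}$, which is the desired estimate.

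The whole analytical content of the present lemma therefore lies in lemma \ref{decaytotal}, where the extra integration by parts exploited for the real part produces the gain of one power of $|\lambda|$ (after the optimal choice $\delta=|\lambda|^{-1}$ in localizing around the singularity at $x=1$). There is no additional obstacle here: the only points one needs to verify are that the constant in lemma \ref{decaytotal}(2) is truly independent of $\ep$ (so the bound survives the limit) and that $A_0(\lambda,\alpha)$ exists as a finite pointwise limit for $|\lambda|>2$, both of which have already been settled.
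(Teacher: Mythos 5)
Your proposal is correct and is exactly the paper's argument: the paper proves this lemma in one line as a consequence of Lemma \ref{decaytotal}, relying (as you do) on the uniform-in-$\ep$ bound for $\text{Re}\,(A(\lambda,\ep,\alpha))$ from part 2 and the pointwise limit characterization of $A_0$ given in Lemma \ref{mainlemma}. Your write-up just makes explicit the passage to the limit $\ep\to 0^+$ that the paper leaves implicit.
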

\begin{proof}
This lemma is a consequence of lemma \ref{decaytotal}
\end{proof}

From \eqref{esta} we find that
\begin{align*}
\frac{d}{dt}\int_{0}^\infty \frac{u(x,t)}{x^{1+\alpha}}dx=&\frac{(1+\alpha)}{2\pi}2\pi\alpha\overline{c}_\alpha\p{\int_{0}^\infty \frac{u(x,t)}{x^{1+\alpha}}dx}^2\\
&+\frac{(1+\alpha)}{2\pi}\int_{-\infty}^\infty A_0(\lambda,\alpha)\left|\int_{0}^\infty x^{-i\lambda-1-\alpha} u(x)dx\right|^2 d\lambda.
\end{align*}
Applying lemma \ref{hurwitz} yields,

\begin{align*}
&\int_{-\infty}^\infty A_0(\lambda,\alpha)\left|\int_{0}^\infty x^{-i\lambda-1-\alpha} u(x)dx\right|^2 d\lambda\geq
-\p{\int_{-\infty}^\infty |A_0(\lambda,\alpha)|\frac{\alpha^2}{\lambda^2+\alpha^2}d\lambda} \p{\int_{0}^\infty \frac{u(x)}{x^{1+\alpha}}dx}^2\\&-||u||_{L^\infty}\p{\int_{\R} |A_0(\lambda,\alpha)|\frac{\alpha (C+C_\epsilon|\lambda|^{1-\alpha+\epsilon})}{\lambda^2+\alpha^2}d\lambda}\int_{0}^\infty \frac{u(x)}{x^{1+\alpha}}dx \\
&-||u||^2_{L^\infty}\p{\int_{-\infty}^\infty|A_0(\lambda,\alpha)|\frac{\p{C+C_\epsilon|\lambda|^{1-\alpha+\ep}}^2}{\lambda^2+\alpha^2}d\lambda}.
\end{align*}

By applying the maximum principle for solutions of \eqref{cht} we conclude that

\begin{align*}
\frac{d}{dt}\int_{0}^\infty \frac{u(x,t)}{x^{1+\alpha}}dx= C_1(\alpha)\p{\int_{0}^\infty \frac{u(x)}{x^{1+\alpha}}dx}^2+||u_0||_{L^\infty}C_2^\epsilon(\alpha)+||u_0||_{L^\infty}^2C_{3}^\epsilon(\alpha).
\end{align*}
where
\begin{align*}
C_1(\alpha)=&\frac{1+\alpha}{2\pi}\p{2\pi \overline{c}_\alpha+\int_{-\infty}^\infty A_0(\alpha,\lambda)\frac{\alpha^2}{\lambda^2+\alpha^2}d\lambda}\\
C_2^\epsilon(\alpha)=&\int_{-\infty}^\infty|A_0(\lambda,\alpha)|\frac{\alpha (C+C_\epsilon|\lambda|^{1-\alpha+\epsilon})}{\lambda^2+\alpha^2}\\
C_3^\epsilon(\alpha)=&\int_{-\infty}^\infty|A_0(\lambda,\alpha)|\frac{(C+C_\epsilon|\lambda|^{1-\alpha+\epsilon})^2}{\lambda^2+\alpha^2}
\end{align*}
where we took into account that $A_0(\alpha,\lambda)\leq 0$.

We firstly analyze $C_2^\epsilon(\alpha)$ and $C_3^\epsilon(\alpha)$. We can split
\begin{align*}
C_2^\epsilon(\alpha)=\int_{|\lambda|<2} ... d\lambda +\int_{|\lambda|>2} ... d\lambda.
\end{align*}
And we can bound, for  $\epsilon<1$,
\begin{align*}
\int_{|\lambda|<1}|A_0(\lambda,\alpha)|\frac{\alpha (C+C_\epsilon|\lambda|^{1-\alpha+\epsilon})}{\lambda^2+\alpha^2}\leq C_\epsilon\int_{|\lambda|<2}\frac{\alpha}{\alpha^2+\lambda^2}d\lambda\leq C_{\epsilon}.
\end{align*}
In addition, by lemma \ref{decay}, for $\epsilon < 1$,
\begin{align*}
\int_{|\lambda|>2}|A_0(\lambda,\alpha)|\frac{\alpha (C+C_\epsilon|\lambda|^{1-\alpha+\epsilon})}{\lambda^2+\alpha^2}\leq C
\int_{|\lambda|>2}|\lambda|^{-2+\alpha}\frac{C+C_\epsilon |\lambda|^{1-\alpha+\epsilon}}{\lambda^2+\alpha^2}d\lambda\leq C_{\epsilon}.
\end{align*}
Therefore $C^\epsilon_2(\alpha)$ can be choosen less or equal than a constant $C_2^\epsilon$ uniformly in $\alpha$ and $\epsilon<1$.

For $C^\epsilon_3(\alpha)$ and $\ep<1$ we have that
\begin{align*}
\int_{|\lambda|<2}|A_0(\alpha,\lambda)|\frac{\p{C+C_\epsilon|\lambda|^{1-\alpha+\epsilon}}^2}{\lambda^2+\alpha^2}d\lambda\leq \frac{C_\epsilon}{\alpha}
\end{align*}
and, by lemma \ref{decay} and  $\epsilon<\frac{1}{2}$.
\begin{align*}
\int_{|\lambda|>2}|A_0(\alpha,\lambda)|\frac{\p{C+C_\epsilon|\lambda|^{1-\alpha+\epsilon}}^2}{\lambda^2+\alpha^2}d\lambda\leq C_\ep\int_{|\lambda|>2}|\lambda|^{-2-\alpha+2\ep}d\lambda \leq C_{\ep}
\end{align*}
Therefore, taking $\epsilon=\frac{1}{4}$ we have that
\begin{align*}
C_2^\epsilon(\alpha)<C_2,\quad C_3^\epsilon(\alpha)<\frac{C_3}{\alpha},
\end{align*}
where $C_2$ and $C_3$ are universal constants.

In addition for $C_1(\alpha)$ we have that, by applying lemma \ref{mainlemma2},
\begin{align*}
2\pi\alpha\overline{c}_\alpha+\int_{-\infty}^\infty A_0(\alpha,\lambda)\frac{\alpha^2}{\lambda^2+\alpha^2}d\lambda=\lim_{\ep\to 0^+}\int_{-\infty}^\infty A(\lambda,\alpha,\ep)\frac{\alpha^2}{\lambda^2+\alpha^2}d\lambda.
\end{align*}
And we can compute by Fubini
\begin{align*}
&\int_{-\infty}^{\infty}A(\lambda,\alpha,\ep)\frac{\alpha^2}{\alpha^2+\lambda^2}=-\overline{c}_\alpha \int_{0}^\infty x^{-2}\p{\frac{\sign(x-1)}{|x-1|^{\alpha}}+\frac{1}{|x-1|^\alpha}}\int_{-\infty}^\infty x^{i\lambda}\frac{\alpha^2}{\alpha^2+\lambda^2}d\lambda dx\\
&=-\alpha\overline{c}_\alpha\pi  \int_{0}^\infty e^{-\alpha|\log(x)|}x^{-2+\ep}\p{\frac{\sign(x-1)}{|x-1|^{\alpha}}+\frac{1}{|x-1|^\alpha}}dx\\
&=-\alpha\overline{c}_\alpha\pi\p{\int_{0}^1 x^{\alpha-2+\ep}\p{-\frac{1}{(1-x)^{\alpha}}+\frac{1}{(x+1)^\alpha}}dx+
\int_{1}^\infty x^{-\alpha-2+\ep}\p{\frac{1}{(x-1)^{\alpha}}+\frac{1}{(x+1)^\alpha}}dx}.
\end{align*}
Thus
\begin{align*}
&2\pi\alpha\overline{c}_\alpha+\int_{-\infty}^\infty A_0(\alpha,\lambda)\frac{\alpha^2}{\lambda^2+\alpha^2}d\lambda\\&=-\alpha\overline{c}_\alpha\pi\p{\int_{0}^1 x^{\alpha-2}\p{-\frac{1}{(1-x)^{\alpha}}+\frac{1}{(x+1)^\alpha}}dx+
\int_{1}^\infty x^{-\alpha-2}\p{\frac{1}{(x-1)^{\alpha}}+\frac{1}{(x+1)^\alpha}}dx}.
\end{align*}
This last expression is positive for $0<\alpha<1$. In order to check it we change variables to get
\begin{align*}
&2\pi\alpha\overline{c}_\alpha+\int_{-\infty}^\infty A_0(\alpha,\lambda)\frac{\alpha^2}{\lambda^2+\alpha^2}d\lambda\\&=-\alpha\overline{c}_\alpha\pi\p{\int_{0}^1 x^{\alpha-2}\p{-\frac{1}{(1-x)^{\alpha}}+\frac{1}{(x+1)^\alpha}}dx+\int_{0}^1 x^{2\alpha}\p{\frac{1}{(1-x)^{\alpha}}+\frac{1}{(x+1)^\alpha}}dx}
\end{align*}
The first integral in the last expression is equal to $\frac{2^{1-\alpha}}{-1+\alpha}$ and since $\alpha>0$ we have that the second one satifies
\begin{align*}
\int_{0}^1 x^{2\alpha}\p{\frac{1}{(1-x)^{\alpha}}+\frac{1}{(x+1)^\alpha}}dx<\int_{0}^1 \p{\frac{1}{(1-x)^{\alpha}}+\frac{1}{(x+1)^\alpha}}dx=\frac{2^{1-\alpha}}{1-\alpha}.
\end{align*}
Thus, we obtain $C_1(\alpha)>0$ for $0<\alpha<1$. It turns that

\begin{align*}
\frac{d}{dt}\int_{0}^\infty \frac{u(x,t)}{x^{1+\alpha}}dx \geq &C_1(\alpha)\p{\int_{0}^\infty \frac{u(x,t)}{x^{1+\alpha}}dx}^2-C_2||u_0||_{L^\infty}\int_{0}^\infty \frac{u(x,t)}{x^{1+\alpha}}dx-\frac{C_3||u_0||_{L^\infty}^2}{\alpha},
\end{align*}
Since $\int_{0}^\infty x^{-1-\alpha}u_0(x)dx$ can be chose arbitrarily large with respect to $||u_0||_{L^\infty}$, $\frac{1}{C_1(\alpha)}$, $\frac{1}{\alpha}$, $C_2$ and $C_3$ fixed $0<\alpha<1$ we can conclude that there exits $u_0$ such that $\int_{0}^\infty x^{-1-\alpha}u(x,t)dx$ blows up in finite time. This is a contradiction since $\int_{0}^\infty x^{-1-\alpha}u(x,t)dx\leq C ||u||_{C^1}$.

\begin{rem}\label{chamizo}The size of the constants $C_2$ and $C_3$ affect to the size of $\int_{0}^\infty x^{-1-\alpha}u_0(x)dx$ to be chosen in order to have blow up. The size of these constants can be reduced by a improvement of lemma \ref{hurwitz}. Although this reduction is not in the scope of this paper let us comment something about this fact. The function $$Z(i\lambda+\alpha)=(2\pi)^{-i\lambda-\alpha}\p{\zeta\p{i\lambda+\alpha,1+\frac{x}{2\pi}}-\zeta\p{i\lambda+\alpha,1-\frac{x}{2\pi}}},$$
where $\zeta(i\lambda+\alpha,x)$ ($x>0$) is the Hurwitz Zeta function (HZF). The HFZ is related with Riemann Zeta function (RZF) and one can use similar ideas used for the RZF in order to bound the HZF. We will give here some indications about how to get  decay for the HZF. All of these indications are due to Fernando Chamizo.
The bound $|\zeta(i\lambda+\alpha,x)|=O\p{|\lambda|^{1-\alpha}}$ can be reached by using Euler-Maclaurin summation formula (the constant in $O(|\lambda|^{1-\alpha})$  degenerates to a $log(|\lambda|)$ for $\alpha=1$).

By using the Poisson summation Formula one can get the estimate (3)-\cite{FF} which can be used to yields  the bound $|\zeta(i\lambda+\alpha,x)|\leq O(|\lambda|^{\frac{1-\alpha}{2}})$ for $\frac{1}{2}\leq \alpha \leq 1$ (the constant in $O(|\lambda|^{1-\alpha})$  degenerates to a $log(|\lambda|)$ for $\alpha=1$) .

By using Van der Corput type techniques one can get $|\zeta(\lambda,x)|\leq O(\log(|\lambda||\lambda|^{\frac{1-\alpha}{3}})$, for $\frac{1}{2}\leq \alpha\leq 1$.

\end{rem}

\textbf{Acknowledgements:}
The authors are supported by the Spanish Ministry of Economy under the ICMAT–Severo Ochoa grant SEV2015-0554 and the Europa Excelencia program ERC2018-092824. AC is partially supported by the MTM2017-89976-P and the ERC Advanced Grant 788250. The authors acknowledges Fernando Chamizo for his contributions in the remark  \ref{chamizo}.


\begin{thebibliography}{8.}

\bibitem{BGL} H. Bae, R. Granero-Belinchon, O. Lazar. Global existence of weak solutions to dissipative transport equations with nonlocal velocity. Nonlinearity, 31 (4), 1484-1515, 2018.

 \bibitem{BKM} P. Biler, G. Karch, and R. Monneau. Nonlinear diffusion of dislocation density and selfsimilar solutions. Communications in Mathematical Physics, 294(1), 145–168, 2010.

\bibitem{CSV} L. A. Caffarelli, F. Soria and J. L. V\'azquez. Regularity of solutions of the fractional porous
medium flow. Journal of the European Mathematical Society, 15(5), 1701–1746, 2013.

\bibitem{CV1}  L. A. Caffarelli and J. L. V\'azquez. Nonlinear porous medium flow with fractional potential pressure.
Archive for Rational Mechanics and Analysis, 202, 537–565, 2011.

\bibitem{CV2} L. A. Caffarelli and J. L. V\'azquez.  Regularity of solutions of the fractional porous medium flow
with exponent 1/2. St. Petersburg Mathematical Journal, 27(3):437–460, 2016.



\bibitem{CFP} J. A. Carrillo, L. Ferreira and J. Precioso. A mass-transportation approach to a one
dimensional fluid mechanics model with nonlocal velocity. Advances in Mathematics, 231(1):306–327,
2012.

\bibitem{CC} A. Castro and D. C\'ordoba. Global existence, singularities and ill-posedness for a nonlocal flux. Advances
in Mathematics, 219(6), 1916–1936, 2008.

\bibitem{CCCF} D. Chae, A. C\'ordoba, D. C\'ordoba, and M. A. Fontelos. Finite time singularities in a
1D model of the quasi-geostrophic equation. Advances in Mathematics, 194(1), 203–223, 2005.

\bibitem{CCF}  A. C\'ordoba, D. C\'ordoba, and M. A. Fontelos. Formation of singularities for a transport
equation with nonlocal velocity. Annals of mathematics, 1377–1389, 2005.



\bibitem{CS} F. Cucker and S. Smale. Emergent behavior in flocks. Automatic Control, IEEE Transactions
on, 52(5), 852–862, 2007.

\bibitem{DKRC}T. Do, A. Kiselev, L. Ryzhik, and C. Tan. Global regularity for the fractional
Euler alignment system. Archive for Rational Mechanics and Analysis, 228(1), 1–37, 2018.

\bibitem{FF} A. Fern\'andez, A. S. Fokas. Asymptotics to all orders of the Hurwitz zeta function. J.
Math. Anal. Appl., 465(1), 423--458, 2018.

\bibitem{F} L. Ferreira and J. Valencia-Guevara. Periodic solutions for a 1D-model with nonlocal velocity via mass transport.
J. Differential Equations 260, 10, 7093–7114, 2016.

\bibitem{G}R. Granero-Belinch\'on. On the fractional Fisher information with applications to a hyperbolic–parabolic system of chemotaxis.  J. Differential Equations 262 (4), 3250–3283, 2017.

\bibitem{HT} S. Ha and E. Tadmor. From particle to kinetic and hydrodynamic descriptions of flocking.
Kinetic and Related Models, 1(3), 415–435, 2008.

\bibitem{LR} D. Li and J. L. Rodrigo. On a One-Dimensional Nonlocal Flux with Fractional Dissipation, SIAM J. Math. Anal. 43, 507-526, 2011.

\bibitem{ST1} R. Shvydkoy and E. Tadmor.  Eulerian dynamics with a commutator forcing. Transactions of
Mathematics and Its Applications, 1(1), 2017.

\bibitem{ST2} R. Shvydkoy and E. Tadmor. Eulerian dynamics with a commutator forcing iii. fractional diffusion of order $0<\alpha<1$. Physica D: Nonlinear Phenomena, 2017.
\bibitem{C} C. Tan. Singularity formation for a fluid mechanics model with nonlocal velocity.
To appear in Communications in Mathematical Sciences.


\end{thebibliography}
\end{document}